\documentclass[onefignum,onetabnum]{siamart251216}

\usepackage{amsfonts}
\usepackage{amssymb}
\usepackage{bm}
\usepackage{booktabs}
\usepackage{graphicx}
\usepackage{mathtools}
\usepackage{subcaption}

\usepackage[T1]{fontenc}
\usepackage{lmodern,microtype}

\graphicspath{{./}}
\DeclareGraphicsExtensions{.pdf,.png,.jpg}

\newcommand{\etal}{\emph{et al.}}
\newcommand{\ie}{i.e.,~}

\newcommand{\HydroTrixi}{\texttt{HydroTrixi.jl}}
\newcommand{\Trixi}{\texttt{Trixi.jl}}

\newcommand{\dd}{\mathop{}\!\mathrm{d}}
\newcommand{\vc}[1]{\bm{#1}}
\newcommand{\mat}[1]{\bm{\mathit{#1}}}
\newcommand{\T}{\mathrm{T}}
\renewcommand{\P}{\mathbb{P}}

\newcommand{\watercontent}{\vartheta}
\newcommand{\pressurehead}{\psi}
\newcommand{\conductivity}{\kappa}
\newcommand{\capacity}{c}

\newsiamremark{remark}{Remark}
\AddToHook{env/remark/begin}{\crefalias{theorem}{remark}}
\newsiamthm{assumption}{Assumption}

\crefname{equation}{}{}

\headers{Mass-conservative adaptive Richards solver}
{T. Montoya and R. J. Spiteri}

\title{A Fully Discrete, Mass-Conservative Adaptive Discontinuous
  Spectral-Element Method for the One-Dimensional Richards Equation in Mixed Form}%

\author{Tristan Montoya\thanks{Department of Computer Science,
  University of Saskatchewan, Saskatoon, SK, Canada
  (\email{tristan.montoya@usask.ca}, \email{spiteri@cs.usask.ca}).}
\and Raymond J. Spiteri\footnotemark[1]}

\ifpdf
\hypersetup{
  pdftitle={A Fully Discrete, Mass-Conservative Adaptive Discontinuous
  Spectral-Element Method for the One-Dimensional Richards Equation in Mixed Form},
  pdfauthor={T. Montoya and R. J. Spiteri}
}
\fi

\begin{document}
\maketitle

\begin{abstract}
  We present and analyze a spatially and temporally adaptive high-order method for the one-dimensional Richards equation describing vertical water transport in unsaturated soil, with an emphasis on resolving sharp wetting fronts while satisfying a fully discrete water mass balance.
  The spatial discretization uses an arbitrary-order local discontinuous Galerkin spectral-element formulation with collocated Legendre--Gauss--Lobatto quadrature, and the time integration is based on a high-order Rosenbrock--Wanner method.
  We focus primarily on the mixed formulation of the Richards equation, which advances water content as the conserved variable while evaluating the nonlinear diffusion operator in terms of pressure head; the resulting fully discrete scheme satisfies the water mass balance in exact arithmetic, independent of spatial resolution and time step size, including under spatial and temporal adaptivity.
  In contrast, the pressure-head formulation advances pressure head through a nonlinear capacity function and satisfies the corresponding conservation identity only at the semi-discrete level.
  Numerical experiments examine convergence through spatial and temporal refinement studies, with mass conservation assessed under spatial and temporal adaptivity using the recently proposed solver flux output method (SFOM).
  The SFOM results confirm machine-precision mass conservation for the mixed formulation, whereas the pressure-head formulation exhibits mass balance errors above roundoff.
  Moreover, in the temporal refinement studies, the mixed formulation yields more accurate solutions than the pressure-head formulation for a prescribed time step size.
  The combination of high-order accuracy, spatial and temporal adaptivity, and fully discrete mass conservation makes the proposed mixed-form discontinuous spectral-element method a promising approach for simulating unsaturated flow in next-generation land models.
\end{abstract}

\begin{keywords}
  Richards equation, spectral-element method, local discontinuous Galerkin method, Rosenbrock--Wanner method, spatial and temporal adaptivity, fully discrete mass conservation
\end{keywords}

\begin{MSCcodes}
  65M50, 65M60, 65M70, 35K55, 86A05
\end{MSCcodes}

\section{Introduction}\label{sec:introduction}
The one-dimensional Richards equation \cite{richardson_weather_prediction_1922,richards_capillary_1931} describes the vertical transport of water in variably saturated soil under gravitational and capillary forces.
It models conservation of water mass by relating the temporal change in water content to the spatial variation of the hydraulic flux, which depends nonlinearly on the pressure head and its gradient through the Darcy--Buckingham law.
Despite its apparent simplicity, the Richards equation poses significant challenges for numerical methods due to the strong nonlinearity of the flux and storage terms as well as its transition from parabolic behaviour to an elliptic limit as the soil nears saturation.
These features give rise to a wide range of spatial and temporal scales, particularly as sharp wetting fronts develop and propagate, making accurate resolution computationally expensive with fixed grids and constant time step sizes.
Furthermore, the semi-discrete systems obtained from discretizing the Richards equation in space typically result in stiff systems of ordinary differential equations (ODEs) or differential-algebraic equations (DAEs), which require careful treatment to maintain stability and accuracy without resorting to prohibitively small time steps.
The review papers by Farthing and Ogden~\cite{farthing_richards_review_2017}, Zha \etal~\cite{zha_review_numerical_solution_2019}, and Chatzikamaris \etal~\cite{chatzikamaris_numerical_solvers_2026} provide comprehensive discussions of the challenges encountered in the numerical solution of the Richards equation.
\par
Numerical methods that combine adaptive spatial and temporal discretization strategies can significantly reduce the computational cost of subsurface flow simulations while maintaining the accuracy needed to resolve sharp wetting fronts and other localized phenomena (see, for example, Miller \etal~\cite{miller_spatially_temporally_adaptive_2006}).
Relative to the second-order spatial discretization schemes commonly used in hydrologic modelling, high-order methods can achieve more accurate solutions for a given number of degrees of freedom in smooth solution regimes, with discontinuous Galerkin methods showing particular promise in the context of the Richards equation due to their local conservation properties and compact stencils (see, for example, Li \etal~\cite{li_local_ldg_2007,li_adaptive_ldg_2007}, Dolejší \etal~\cite{dolejsi_adaptive_space_time_2019,dolejsi_error_estimates_adaptivity_2024}, Clément \etal~\cite{clement_adaptive_dg_2021}, and Tian \etal~\cite{tian_h_adaptive_ldg_2027}).
However, ensuring an exact fully discrete water mass balance that is independent of the time step size and preserved through adaptive mesh refinement and coarsening remains a significant challenge, even for spatial discretizations that are locally conservative at the semi-discrete level.
This work addresses these challenges by developing and analyzing a spatially and temporally adaptive high-order method that ensures exact fully discrete mass conservation over the complete simulation, including both implicit time integration and mesh adaptation.
The method combines the following key components:
\begin{itemize}
  \item An arbitrary-order spectral-element approximation with collocated Legendre--Gauss--Lobatto (LGL) quadrature (see, for example, Kopriva~\cite{kopriva_implementing_spectral_methods_2009}) is used together with a local discontinuous Galerkin (LDG) formulation~\cite{cockburn_ldg_1998}, resulting in a compact spatial stencil and a diagonal element mass matrix.
  \item The mixed formulation of the Richards equation is used, in which water content is advanced as the conserved variable while the pressure head is retained within the spatial operator to evaluate the nonlinear flux.
  For comparison, we also consider the pressure-head formulation, in which pressure head is advanced through the nonlinear capacity function using the same spatial discretization; a purely water-content-based formulation is left for future work.
  \item Tree-based mesh adaptivity implemented in \Trixi{}~\cite{ranocha_trixi_2022} is used to concentrate resolution around sharp wetting fronts, where we employ a refinement indicator based on the total variation of the water content within each element together with a mass-conservative solution transfer procedure.
  \item A linearly implicit Rosenbrock--Wanner method~\cite{hairer_solving_1996} is used for time integration of the resulting stiff system of DAEs, providing high-order accuracy while avoiding iterative nonlinear solves and implementing adaptive time step control based on an embedded error estimate.
  \item The fully discrete conservation analysis uses the solver flux output method (SFOM) introduced by Ireson \etal~\cite{ireson_openre_2023} to track cumulative boundary fluxes with the same time integration method used to advance the solution.
  Under standard stage solvability assumptions, the resulting augmented DAE system preserves a linear invariant that guarantees an exact water mass balance in exact arithmetic for every accepted time step, which is preserved across mesh updates.
  \item Numerical experiments demonstrate high-order spatial and temporal convergence for a manufactured solution as well as mass conservation to roundoff under spatial and temporal adaptivity for canonical infiltration problems.
\end{itemize}
To our knowledge, the method presented in this work appears to be the first spatially and temporally adaptive high-order discretization for the Richards equation for which an exact fully discrete water mass balance is established, including time integration as well as mesh refinement and coarsening.
\par
The remainder of this paper is organized as follows.
\Cref{sec:mathematical_formulation} presents the mixed and pressure-head formulations of the Richards equation. \Cref{sec:spatial_discretization} describes the spatial discretization, including the spectral-element approximation, LDG formulation, spatial adaptivity, and mass-conservative solution transfer. \Cref{sec:temporal_discretization} introduces the adaptive Rosenbrock--Wanner time integration and establishes the fully discrete water mass balance. \Cref{sec:numerical_experiments} presents numerical experiments using an open-source Julia implementation of the proposed approach, in which we assess spatial and temporal convergence as well as mass conservation. \Cref{sec:conclusions} summarizes the main findings and discusses future work.

\section{Mathematical formulation}\label{sec:mathematical_formulation}

We consider vertical flow in a one-dimensional soil column of length $L > 0$, with depth measured downward from the soil surface.
Denoting the pressure head at depth $z \in [0,L]$ and time $t \geq 0$ by $\pressurehead(z,t)$, the Richards equation is defined as
\begin{equation}\label{eq:richards_equation}
  \partial_t \watercontent(\pressurehead(z,t)) =
  \partial_z f(\pressurehead(z,t),\partial_z\pressurehead(z,t)),
  \quad z \in [0,L],\quad t > 0,
\end{equation}
where the nonlinear flux function is defined by $f(\pressurehead,\partial_z\pressurehead) \coloneqq \conductivity(\pressurehead)(\partial_z \pressurehead - 1)$, with $\watercontent$ and $\conductivity$ denoting the water content and hydraulic conductivity functions provided by a constitutive model.
The unit gravitational-gradient contribution arises from the derivative of the total hydraulic head $\psi - z$, neglecting velocity head, under the convention that $z$ is positive downward.
In this work, we restrict our attention to pressure-head values in the unsaturated regime $\pressurehead < 0$.
Over this range, we assume that $\watercontent$ is twice continuously differentiable with a positive first derivative and that $\conductivity$ is continuously differentiable and non-negative.
\par
We supplement \cref{eq:richards_equation} with an initial pressure-head profile $\pressurehead(z,0)$.
A Dirichlet boundary condition (i.e., a prescribed pressure head) or a Neumann boundary condition (i.e., a prescribed flux) may be imposed at the top or bottom of the soil column, and we assume that all boundary data are continuously differentiable in time.
Integrating \cref{eq:richards_equation} in space and time, we see that the change in total volumetric water storage per unit cross-sectional area (hereafter referred to as the total water storage) is equal to the time-integrated net flux of water through its boundaries:
\begin{equation}\label{eq:continuous_water_mass_balance}
  \begin{aligned}
    &\int_0^L \watercontent(\pressurehead(z,t))\dd z
    -
    \int_0^L \watercontent(\pressurehead(z,0))\dd z \\
    &\quad =
    \int_0^t f(\pressurehead(L,s),\partial_z\pressurehead(L,s))\dd s
    -
    \int_0^t f(\pressurehead(0,s),\partial_z\pressurehead(0,s))\dd s.
  \end{aligned}
\end{equation}
For constant water density and column cross-sectional area, the total water storage is proportional to the total water mass; accordingly, we refer to this balance as water mass conservation.

\subsection{Mixed formulation}\label{sec:mixed_formulation}

As written in \cref{eq:richards_equation}, the conservative variable is the water content $\watercontent(\pressurehead(z,t))$, not the pressure head $\pressurehead(z,t)$.
Because water content depends nonlinearly on pressure head, a time integration method that advances the pressure head and preserves only linear invariants of its state variables does not automatically conserve mass.
Celia \etal~\cite{celia_mass_conservative_richards_1990} sought to address this issue by adopting a mixed formulation of the Richards equation, in which the water content is advanced in time as a conserved variable while the flux is evaluated in terms of the pressure head.
To facilitate time integration using general-purpose DAE solvers, we retain water content and pressure head as separate solution variables, similarly to Kees and Miller~\cite{kees_higher_order_time_integration_2002}, Cumming \etal~\cite{cumming_mass_conservative_2011}, and Bootsma \etal~\cite{bootsma_modified_picard_2026}.
Denoting the conserved water-content variable by $\theta(z,t)$, we express the mixed-form Richards equation as
\begin{subequations}\label{eq:richards_mixed}
  \begin{align}
    \partial_t \theta(z,t) &=
    \partial_z f(\pressurehead(z,t),\partial_z\pressurehead(z,t)),
    \label{eq:richards_mixed_balance} \\
    \theta(z,t) &= \watercontent(\pressurehead(z,t)),
    \label{eq:richards_mixed_constitutive}
  \end{align}
\end{subequations}
where \cref{eq:richards_mixed_constitutive} is a pointwise algebraic constraint that relates the water content to the pressure head through the constitutive model.

\subsection{Pressure-head formulation}\label{sec:pressure_head_formulation}

For comparison, we also consider the pressure-head formulation of the Richards equation, which results from applying the chain rule to the time derivative in \cref{eq:richards_equation} to obtain
\begin{equation}\label{eq:richards_pressure_head}
  \capacity(\pressurehead(z,t))\partial_t \pressurehead(z,t) =
  \partial_z f(\pressurehead(z,t),\partial_z\pressurehead(z,t)),
\end{equation}
where the capacity function is given by $\capacity(\pressurehead) \coloneqq \watercontent'(\pressurehead)$.
Because $\capacity(\pressurehead)>0$ over the range of pressure-head values considered, we can divide \cref{eq:richards_pressure_head} by $\capacity(\pressurehead(z,t))$ to solve explicitly for the time derivative of the pressure head.
Although the mixed and pressure-head formulations are equivalent at the continuous level for smooth solutions, the distinction between evolving $\theta(z,t)$ as a conserved quantity in \cref{eq:richards_mixed} and evolving $\pressurehead(z,t)$ according to \cref{eq:richards_pressure_head} is central to the fully discrete mass conservation result discussed in \cref{sec:fully_discrete_mass_conservation}.

\section{Spatial discretization}\label{sec:spatial_discretization}

In this section, we describe the LDG spectral-element discretization used for both the mixed and pressure-head formulations.

\subsection{Spectral-element approximation}\label{sec:spectral_element_approximation}

Following a standard nodal spectral-element construction (as described, for example, in \cite{kopriva_implementing_spectral_methods_2009}), let the column $[0,L]$ be partitioned into $K$ non-overlapping elements $[z_{k-1},z_k]$ with $0 = z_0 < \cdots < z_K = L$.
With the element size defined as $h_k \coloneqq z_k - z_{k-1}$, each element can be mapped from the reference interval $[-1,1]$ through the affine transformation
\begin{equation}\label{eq:element_map}
  X_k(\xi) \coloneqq \frac{h_k}{2}\xi + \frac{z_k+z_{k-1}}{2},
\end{equation}
which has a constant Jacobian given by $J_k = h_k/2$.
Under the mapping $z = X_k(\xi)$, we approximate the pressure head and its gradient on each element as
\begin{subequations}\label{eq:polynomial_expansion}
  \begin{align}
    \pressurehead(X_k(\xi),t) &\approx
    \pressurehead_k^N(\xi,t) \coloneqq \sum_{i=0}^N \pressurehead_{k,i}(t)\ell_i(\xi), \\
    \frac{\partial \pressurehead}{\partial z}(X_k(\xi),t) &\approx
    g_k^N(\xi,t) \coloneqq \sum_{i=0}^N g_{k,i}(t)\ell_i(\xi),
  \end{align}
\end{subequations}
where $N \geq 1$ is the chosen polynomial degree of the spatial approximation, and the Lagrange basis functions $\{\ell_i\}_{i=0}^N$ for the space $\P_N([-1,1])$ of real-valued polynomials of degree at most $N$ are defined for $N+1$ distinct interpolation nodes $\{\xi_i\}_{i=0}^N$ as
\begin{equation}\label{eq:lagrange_polynomials}
  \ell_i(\xi) \coloneqq
  \prod_{\substack{m=0 \\ m\neq i}}^N
  \frac{\xi-\xi_m}{\xi_i-\xi_m},
  \quad i \in \{0,\ldots,N\}.
\end{equation}
In this work, we choose $\{\xi_i\}_{i=0}^N$ to be the LGL nodes, which are the roots of $(1-\xi^2)P_N'(\xi)$, where $P_N$ is the Legendre polynomial of degree $N$.
The corresponding positive quadrature weights $\{\omega_i\}_{i=0}^N$ are then given by
\begin{equation}\label{eq:lgl_weights}
  \omega_i \coloneqq \int_{-1}^1 \ell_i(\xi) \dd \xi =
  \frac{2}{N(N+1)}\frac{1}{P_N(\xi_i)^2},
  \quad i \in \{0,\ldots,N\},
\end{equation}
resulting in a quadrature rule that is exact for all polynomials of degree at most $2N-1$.
Adopting a zero-based indexing convention, we construct the collocation derivative matrix $\mat{D}$ and diagonal weight matrix $\mat{W}$ associated with the LGL quadrature rule as
\begin{equation}\label{eq:sbp_matrices_1d}
  D_{ij} \coloneqq \frac{\dd \ell_j}{\dd \xi}(\xi_i),
  \quad
  W_{ij} \coloneqq \omega_i\delta_{ij}, \quad i,j \in \{0,\ldots,N\}.
\end{equation}
Because the approximation space contains the constant function, the derivative matrix satisfies $\mat{D}\vc{1}=\vc{0}$, where $\vc{1} \coloneqq (1,\ldots,1)^\T$ is a vector of ones and $\vc{0} \coloneqq (0,\ldots,0)^\T$ is a vector of zeros, both of length $N+1$.
Furthermore, as shown by Gassner~\cite{gassner_skew_symmetric_dg_2013}, the derivative and quadrature weight matrices satisfy the summation-by-parts property
\begin{equation}\label{eq:sbp_property_1d}
  \mat{W}\mat{D} + \mat{D}^\T\mat{W} = \vc{e}_N\vc{e}_N^\T - \vc{e}_0\vc{e}_0^\T,
\end{equation}
where we define the vectors $\vc{e}_0 \coloneqq (1,0,\ldots,0)^\T$ and $\vc{e}_N \coloneqq (0,\ldots,0,1)^\T$.

\subsection{Local discontinuous Galerkin method}\label{sec:ldg_formulation}

We now construct the LDG discretization of the spatial terms shared by the mixed and pressure-head formulations, leaving the choice of evolved unknown to \cref{sec:semidiscrete_formulation}.
Following Cockburn and Shu~\cite{cockburn_ldg_1998}, we first rewrite the second-order problem as a first-order system given by
\begin{subequations}\label{eq:first_order_system_physical}
  \begin{align}
    g(z,t) &= \partial_z\pressurehead(z,t),
    \label{eq:first_order_gradient_physical} \\
    \partial_t\watercontent(\pressurehead(z,t)) &= \partial_z
    f(\pressurehead(z,t),g(z,t)). \label{eq:first_order_balance_physical}
  \end{align}
\end{subequations}
Expressing the system in reference coordinates using \cref{eq:element_map}, inserting the polynomial approximations in \cref{eq:polynomial_expansion}, and integrating by parts against a test function then gives the following weak formulation in space: find $\pressurehead_k^N(\cdot,t),g_k^N(\cdot,t) \in \P_N([-1,1])$ satisfying
\begin{subequations}\label{eq:ldg_weak_form}
  \begin{equation}\label{eq:ldg_gradient}
    \begin{aligned}
      \int_{-1}^{1} J_k w(\xi) g_k^N(\xi,t) \dd \xi
      &= -\int_{-1}^{1} \partial_\xi
      w(\xi)\pressurehead_k^N(\xi,t)\dd \xi \\
      &\quad
      -w(-1)\pressurehead_{k-1}^\star(t) + w(1)\pressurehead_k^\star(t),
    \end{aligned}
  \end{equation}
  \begin{equation}\label{eq:ldg_balance}
    \begin{aligned}
      \int_{-1}^{1}
      J_k v(\xi) \partial_t\watercontent(\pressurehead_k^N(\xi,t)) \dd \xi
      &= -\int_{-1}^{1}
      \partial_\xi v(\xi) f(\pressurehead_k^N(\xi,t),g_k^N(\xi,t)) \dd \xi \\
      &\quad -v(-1)f_{k-1}^\star(t) + v(1)f_k^\star(t),
    \end{aligned}
  \end{equation}
\end{subequations}
for all test functions $v,w \in \P_N([-1,1])$.
The quantities $\pressurehead_{k-1}^\star(t)$ and $\pressurehead_k^\star(t)$ are unique numerical traces of the pressure head at the upper endpoint $z_{k-1}$ and the lower endpoint $z_k$, respectively, of element $k$, and $f_{k-1}^\star(t)$ and $f_k^\star(t)$ are the corresponding numerical fluxes.
Using a minimally dissipative LDG interface treatment with alternating traces and no interior penalty (see, for example, Cockburn and Dong~\cite{cockburn_minimal_dissipation_2007}), we take the numerical pressure-head trace from the upper side of each interior interface as
\begin{equation}\label{eq:ldg_traces}
  \pressurehead_k^\star(t) \coloneqq \pressurehead_k^N(1,t),
  \quad k \in \{1,\ldots,K-1\},
\end{equation}
and the numerical flux from the lower side as
\begin{equation}\label{eq:ldg_numerical_fluxes}
  f_k^\star(t) \coloneqq f(\pressurehead_{k+1}^N(-1,t),g_{k+1}^N(-1,t)),
  \quad k \in \{1,\ldots,K-1\}.
\end{equation}
The numerical traces and fluxes at the top and bottom boundaries are determined by the chosen boundary conditions, which we assume to be of Dirichlet or Neumann type.
For instance, a Dirichlet boundary condition of the form $\pressurehead(0,t) = \pressurehead_{\mathrm T}(t)$ can be weakly imposed at the top boundary as
\begin{subequations}
  \begin{align}
    \pressurehead_0^\star(t) & \coloneqq \pressurehead_{\mathrm T}(t), \\
    f_0^\star(t)             & \coloneqq f(\pressurehead_1^N(-1,t),g_1^N(-1,t)) +
    \tau(\pressurehead_{\mathrm T}(t),h_1,N)
    \left(\pressurehead_1^N(-1,t)-\pressurehead_{\mathrm T}(t)\right),
  \end{align}
\end{subequations}
where $\tau(\pressurehead, h, N) \coloneqq N(N+1)\conductivity(\pressurehead)/h$ is a penalty parameter that provides a standard scaling of the pressure-head jump with respect to the polynomial degree $N$ and element size $h$ under collocated LGL quadrature (see, for example, Manzanero \etal~\cite[Appendix~A]{manzanero_bassi_rebay_2018}).
At the bottom boundary, we can similarly impose $\pressurehead(L,t) = \pressurehead_{\mathrm B}(t)$ as
\begin{subequations}
  \begin{align}
    \pressurehead_K^\star(t) & \coloneqq \pressurehead_{\mathrm B}(t), \\
    f_K^\star(t)             & \coloneqq f(\pressurehead_K^N(1,t),g_K^N(1,t)) -
    \tau(\pressurehead_{\mathrm B}(t),h_K,N)
    \left(\pressurehead_K^N(1,t)-\pressurehead_{\mathrm B}(t)\right),
  \end{align}
\end{subequations}
where we note that the penalty term is subtracted to account for the reversed orientation of the outward normal.
Neumann boundary conditions of the form $f(\pressurehead(0,t),g(0,t)) = f_{\mathrm{T}}(t)$ or $f(\pressurehead(L,t),g(L,t)) = f_{\mathrm{B}}(t)$ are imposed by setting the numerical flux to the prescribed value and taking the numerical pressure-head trace from the adjacent interior state.

\subsection{Semi-discrete formulation}\label{sec:semidiscrete_formulation}

To obtain a system of ODEs or DAEs, we take the test functions in \cref{eq:ldg_weak_form} to be the Lagrange basis functions and approximate the integrals using the LGL quadrature rule.
Evaluating the approximate pressure head and its gradient on element $k$ at the LGL quadrature nodes as $\vc{\pressurehead}_k(t) \coloneqq (\pressurehead_{k,0}(t),\ldots,\pressurehead_{k,N}(t))^\T$ and $\vc{g}_k(t) \coloneqq (g_{k,0}(t),\ldots,g_{k,N}(t))^\T$, respectively, we compute the nonlinearities pointwise as
\begin{subequations}\label{eq:pointwise_nonlinearities}
  \begin{align}
    \vc{f}_k(t) &\coloneqq
    \left(
    f(\pressurehead_{k,0}(t),g_{k,0}(t)),\ldots,
    f(\pressurehead_{k,N}(t),g_{k,N}(t))
    \right)^\T,
    \label{eq:pointwise_flux_vector} \\
    \mat{C}_k(t) &\coloneqq
    \operatorname{diag}\left(
    \capacity(\pressurehead_{k,0}(t)),\ldots,
    \capacity(\pressurehead_{k,N}(t))
    \right).
    \label{eq:pointwise_capacity_matrix}
  \end{align}
\end{subequations}
The remaining step is to choose the evolved unknown.
The semi-discrete mixed formulation approximates the water content on element $k$ as
\begin{equation}\label{eq:water_content_approximation}
  \theta(X_k(\xi),t) \approx \theta_k^N(\xi,t)
  \coloneqq \sum_{i=0}^N \theta_{k,i}(t)\ell_i(\xi),
\end{equation}
and evolves the vector $\vc{\theta}_k(t) \coloneqq (\theta_{k,0}(t),\ldots,\theta_{k,N}(t))^\T$ of water-content values at the LGL quadrature nodes, resulting in a semi-discrete system given by
\begin{subequations}\label{eq:semidiscrete_mixed}
  \begin{align}
    \vc{g}_k(t) &= \frac{1}{J_k}\left(
    -\mat{W}^{-1}\mat{D}^\T\mat{W}\vc{\pressurehead}_k(t)
    -\mat{W}^{-1}\vc{e}_0\pressurehead_{k-1}^\star(t)
    +\mat{W}^{-1}\vc{e}_N\pressurehead_k^\star(t)
    \right),
    \label{eq:semidiscrete_mixed_gradient} \\
    \dot{\vc{\theta}}_k(t) &= \frac{1}{J_k}\left(
    -\mat{W}^{-1}\mat{D}^\T\mat{W}\vc{f}_k(t)
    -\mat{W}^{-1}\vc{e}_0 f_{k-1}^\star(t)
    +\mat{W}^{-1}\vc{e}_N f_k^\star(t)
    \right),
    \label{eq:semidiscrete_mixed_balance}
  \end{align}
\end{subequations}
subject to the algebraic constraint $\vc{\theta}_k(t) = (\watercontent(\pressurehead_{k,0}(t)),\ldots, \watercontent(\pressurehead_{k,N}(t)))^\T$.
The semi-discrete pressure-head formulation likewise follows from evolving the vector $\vc{\pressurehead}_k(t)$ of pressure-head values through the capacity function as
\begin{subequations}\label{eq:semidiscrete_pressure_head}
  \begin{align}
    \vc{g}_k(t) &= \frac{1}{J_k}\left(
    -\mat{W}^{-1}\mat{D}^\T\mat{W}\vc{\pressurehead}_k(t)
    -\mat{W}^{-1}\vc{e}_0\pressurehead_{k-1}^\star(t)
    +\mat{W}^{-1}\vc{e}_N\pressurehead_k^\star(t)
    \right),
    \label{eq:semidiscrete_pressure_head_gradient} \\
    \mat{C}_k(t)\dot{\vc{\pressurehead}}_k(t) &=
    \frac{1}{J_k}\left(
    -\mat{W}^{-1}\mat{D}^\T\mat{W}\vc{f}_k(t)
    -\mat{W}^{-1}\vc{e}_0 f_{k-1}^\star(t)
    +\mat{W}^{-1}\vc{e}_N f_k^\star(t)
    \right).
    \label{eq:semidiscrete_pressure_head_balance}
  \end{align}
\end{subequations}
The formulations in \cref{eq:semidiscrete_mixed,eq:semidiscrete_pressure_head} therefore share the same LDG approximation of the nonlinear flux divergence, but they differ in the variable to which the time derivative is applied.
We then obtain the following semi-discrete counterpart of \cref{eq:continuous_water_mass_balance}.

\begin{theorem}\label{thm:semidiscrete_mass_conservation}
  If the numerical flux is single-valued at every interior interface, then the mixed and pressure-head formulations satisfy the semi-discrete water mass balance
  \begin{equation}\label{eq:semidiscrete_water_mass_balance}
    \frac{\dd}{\dd t}
    \sum_{k=1}^{K} J_k\vc{1}^\T\mat{W}\vc{\theta}_k(t) =
    f_K^\star(t) - f_0^\star(t),
  \end{equation}
  where $\vc{\theta}_k(t)$ denotes the nodal water-content vector.
  In the mixed formulation, this vector is evolved directly; in the pressure-head formulation, it is evaluated from the evolved pressure-head vector according to $\vc{\theta}_k(t) \coloneqq (\watercontent(\pressurehead_{k,0}(t)),\ldots, \watercontent(\pressurehead_{k,N}(t)))^\T$.
\end{theorem}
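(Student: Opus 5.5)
The plan is to multiply the balance equation on each element by $J_k\vc{1}^\T\mat{W}$, sum over $k$, and show that the interior interface terms telescope. The first step is to put both formulations into a common form:
\begin{equation*}
  J_k\vc{1}^\T\mat{W}\dot{\vc{\theta}}_k(t) = -\vc{1}^\T\mat{D}^\T\mat{W}\vc{f}_k(t) - \vc{1}^\T\vc{e}_0 f_{k-1}^\star(t) + \vc{1}^\T\vc{e}_N f_k^\star(t).
\end{equation*}

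For the mixed formulation, this follows directly from \cref{eq:semidiscrete_mixed_balance}, since $\mat{W}\mat{W}^{-1}$ cancels.

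For the pressure-head formulation, the argument goes through the chain rule. Because $\watercontent$ is continuously differentiable, the nodal vector $\vc{\theta}_k(t) \coloneqq (\watercontent(\pressurehead_{k,0}(t)),\ldots)^\T$ satisfies $\dot{\vc{\theta}}_k(t) = \mat{C}_k(t)\dot{\vc{\pressurehead}}_k(t)$. This holds exactly because the capacity matrix is diagonal and built from $\capacity = \watercontent'$ evaluated at the same nodes, so the collocated, pointwise structure is what makes the identity exact. Substituting into \cref{eq:semidiscrete_pressure_head_balance} then gives the same right-hand side as in the mixed case. Up to this point the argument is purely algebraic; the gradient equations \cref{eq:semidiscrete_mixed_gradient} and \cref{eq:semidiscrete_pressure_head_gradient} play no role beyond defining $\vc{f}_k$.

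The next step uses $\mat{D}\vc{1}=\vc{0}$, which gives $\vc{1}^\T\mat{D}^\T\mat{W}\vc{f}_k = (\mat{D}\vc{1})^\T\mat{W}\vc{f}_k = 0$. Together with $\vc{1}^\T\vc{e}_0=\vc{1}^\T\vc{e}_N=1$, this reduces each element contribution to
\begin{equation*}
  \frac{\dd}{\dd t}J_k\vc{1}^\T\mat{W}\vc{\theta}_k(t) = f_k^\star(t) - f_{k-1}^\star(t).
\end{equation*}
The summation-by-parts property \cref{eq:sbp_property_1d} is not needed for this step.

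Summing over $k=1,\ldots,K$ then telescopes to $f_K^\star - f_0^\star$. The hypothesis is used exactly here: the telescoping requires that the flux $f_k^\star$ appearing in element $k$ and in element $k+1$ is the same single value at each interior interface $k\in\{1,\ldots,K-1\}$. This holds for the one-sided choice \cref{eq:ldg_numerical_fluxes}.

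I do not expect a real obstacle. The only point needing care is the pressure-head case, specifically justifying that the defined $\vc{\theta}_k$ is differentiable with $\dot{\vc\theta}_k=\mat{C}_k\dot{\vc\pressurehead}_k$. This requires $\capacity>0$ so that the ODE is well-defined, and smoothness of $\watercontent$ on $\pressurehead<0$, both of which are assumed.
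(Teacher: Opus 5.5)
Your proposal is correct and follows essentially the same route as the paper: you left-multiply by $J_k\vc{1}^\T\mat{W}$, use $\mat{D}\vc{1}=\vc{0}$ and $\vc{1}^\T\vc{e}_0=\vc{1}^\T\vc{e}_N=1$ to obtain the elementwise balance, invoke the nodal chain rule $\dot{\vc{\theta}}_k=\mat{C}_k\dot{\vc{\pressurehead}}_k$ for the pressure-head case, and telescope the single-valued interior fluxes. Your remarks that neither the summation-by-parts property nor the gradient equations are needed are also accurate.
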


\begin{proof}
  For the mixed formulation, left-multiplication of \cref{eq:semidiscrete_mixed_balance} by $J_k\vc{1}^\T\mat{W}$ gives the local water mass balance
  \begin{equation}\label{eq:local_water_mass_balance}
    \begin{aligned}
      J_k\vc{1}^\T\mat{W}\dot{\vc{\theta}}_k(t)
      &=
      -\vc{1}^\T\mat{D}^\T\mat{W}\vc{f}_k(t)
      -\vc{1}^\T\vc{e}_0 f_{k-1}^\star(t)
      +\vc{1}^\T\vc{e}_N f_k^\star(t) \\
      &= -f_{k-1}^\star(t) + f_k^\star(t),
    \end{aligned}
  \end{equation}
  where the second equality follows from the identities $\mat{D}\vc{1}=\vc{0}$ and $\vc{1}^\T\vc{e}_0=\vc{1}^\T\vc{e}_N=1$.
  For the pressure-head formulation, applying the chain rule in time to the nodal definition of $\vc{\theta}_k(t)$ gives $\dot{\vc{\theta}}_k(t)=\mat{C}_k(t)\dot{\vc{\pressurehead}}_k(t)$, so \cref{eq:semidiscrete_pressure_head_balance} yields the same elementwise water mass balance obtained in \cref{eq:local_water_mass_balance}.
  For either formulation, summing this balance over all elements and moving the time derivative outside the sum yields
  \begin{equation}\label{eq:telescoping_sum}
    \frac{\dd}{\dd t}
    \sum_{k=1}^{K} J_k\vc{1}^\T\mat{W}\vc{\theta}_k(t) =
    \sum_{k=1}^{K}\left( f_k^\star(t) - f_{k-1}^\star(t) \right),
  \end{equation}
  and hence the global semi-discrete balance in \cref{eq:semidiscrete_water_mass_balance} follows from the fact that the single-valued numerical fluxes cancel at the interior interfaces.
\end{proof}

\subsection{Spatial adaptivity}\label{sec:adaptive_mesh_refinement}

We use the hierarchical tree-based mesh structure and adaptivity mechanics implemented in \Trixi{}~\cite{ranocha_trixi_2022}, although the indicator and conservative transfer procedure employed in this work are specifically tailored to the Richards equation.
An $h$-adaptive approach is used, wherein refinement bisects an element into two children, coarsening merges two adjacent siblings into their parent, and the polynomial degree remains fixed across all elements.
In order to track moving fronts in the unsaturated regime, the elementwise refinement indicator is computed as the discrete total variation of the water content within a given element, normalized by the global nodal water-content range. 
We compute the refinement indicator for each element as
\begin{equation}
  \eta_k(t) \coloneqq
  \frac{\sum_{i=0}^N \omega_i
    \left|\sum_{j=0}^N D_{ij}\vartheta(\psi_{k,j}(t))\right|}
  {\theta_{\max}(t)-\theta_{\min}(t)+\epsilon},
\end{equation}
where $\epsilon>0$ is a small regularization parameter that prevents division by zero, and we compute the minimum and maximum nodal water-content values over all elements as
\begin{equation}
\theta_{\min}(t) \coloneqq \min_{k=1}^K \min_{i=0}^N \vartheta(\psi_{k,i}(t)), \quad
\theta_{\max}(t) \coloneqq \max_{k=1}^K \max_{i=0}^N \vartheta(\psi_{k,i}(t)).
\end{equation}
A simple two-threshold rule is used to determine whether an element should be refined, coarsened, or left unchanged.
Let $l_k$ denote the current refinement level of element $k$, such that the size of the element is given by $h_k = 2^{-l_k}L$, and let $l_{\min}$ and $l_{\max}$ denote the prescribed minimum and maximum refinement levels, respectively, with $l_{\min} < l_{\max}$.
For a coarsening threshold $\eta_{\mathrm{c}}$ and a refinement threshold $\eta_{\mathrm{r}}$, with $\eta_{\mathrm{c}} < \eta_{\mathrm{r}}$, we mark element $k$ for refinement if $l_k<l_{\max}$ and $\eta_k(t)>\eta_{\mathrm{r}}$, and we mark it for coarsening if $l_k>l_{\min}$ and $\eta_k(t)\leq\eta_{\mathrm{c}}$.
We split each element that is marked for refinement, and we merge a pair of sibling elements only if both are marked for coarsening.
Additional refinements are performed to ensure that the difference in refinement levels between neighbouring elements is at most one, and coarsening is only performed if the resulting mesh would satisfy this same condition.

\subsection{Mass-conservative solution transfer}\label{sec:conservative_solution_transfer}

When an element is refined to produce two children, the parent polynomial on the reference element, which we denote for an arbitrary transfer variable as $\phi_k^N \in \P_N([-1,1])$, is split into two child polynomials given by
\begin{equation}\label{eq:adaptive_refinement_restriction}
  \phi_{k,\mathrm L}^N(\zeta) \coloneqq \phi_k^N\left(\frac{\zeta-1}{2}\right), \quad
  \phi_{k,\mathrm R}^N(\zeta) \coloneqq \phi_k^N\left(\frac{\zeta+1}{2}\right),
\end{equation}
which are evaluated at the reference LGL nodes to obtain the nodal degrees of freedom after the mesh update.
When two sibling elements are combined to coarsen the mesh, we compute the $L^2$ projection of the child transfer-variable polynomials onto the parent element.
Specifically, given the child polynomials $\phi_{k,\mathrm L}^N,\phi_{k,\mathrm R}^N \in \P_N([-1,1])$, we solve for a parent polynomial $\phi_k^N \in \P_N([-1,1])$ such that
\begin{equation}\label{eq:adaptive_coarsening_projection}
  \begin{aligned}
    \int_{-1}^1 \phi_k^N(\xi)v(\xi)\dd\xi
    & = \frac{1}{2}\int_{-1}^1 \phi_{k,\mathrm L}^N(\zeta)v\left(\frac{\zeta-1}{2}\right)\dd\zeta \\
    & \quad + \frac{1}{2}\int_{-1}^1 \phi_{k,\mathrm R}^N(\zeta)v\left(\frac{\zeta+1}{2}\right)\dd\zeta
  \end{aligned}
\end{equation}
holds for all $v \in \P_N([-1,1])$.
This projection is evaluated using an $(N+1)$-point Legendre--Gauss quadrature rule, which is exact for the polynomial products involved.
We now consider the case in which water content is the transfer variable, which was proposed in \cite{miller_spatially_temporally_adaptive_2006} as a means of conserving mass across mesh updates for an adaptive finite-difference spatial discretization.
The following result shows that every such update preserves the discrete total water mass for our discontinuous spectral-element method.

\begin{theorem}\label{thm:conservative_solution_transfer}
  Let the transfer variable be the water-content polynomial defined in \cref{eq:water_content_approximation}.
  For the mixed formulation, its nodal coefficients $\theta_{k,i}(t)$ are evolved directly according to \cref{eq:semidiscrete_mixed_balance}, whereas for the pressure-head formulation, the polynomial is obtained through Lagrange interpolation with nodal values $\theta_{k,i}(t) = \watercontent(\pressurehead_{k,i}(t))$.
  If the solution transfer procedures in \cref{eq:adaptive_refinement_restriction,eq:adaptive_coarsening_projection} are applied for a mesh update at time $t$ with $\phi_k^N(\xi) = \theta_k^N(\xi,t)$, $\phi_{k,\mathrm L}^N(\zeta) = \theta_{k,\mathrm L}^N(\zeta,t)$, and $\phi_{k,\mathrm R}^N(\zeta) = \theta_{k,\mathrm R}^N(\zeta,t)$, every adaptive mesh update then preserves the total water storage in \cref{eq:semidiscrete_water_mass_balance}.
\end{theorem}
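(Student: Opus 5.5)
The plan is to show that the discrete storage $\sum_k J_k\vc{1}^\T\mat{W}\vc{\theta}_k$ equals the exact integral $\sum_k J_k\int_{-1}^1\theta_k^N(\xi,t)\dd\xi$. After that, the refinement and coarsening operators only need to preserve exact integrals. Since $\theta_k^N\in\P_N([-1,1])$ and the LGL rule is exact for degree $2N-1\geq N$, we have $\vc{1}^\T\mat{W}\vc{\theta}_k=\sum_i\omega_i\theta_{k,i}=\int_{-1}^1\theta_k^N\dd\xi$. This holds identically in both formulations, because in each case $\theta_k^N$ is the Lagrange interpolant of the nodal water-content values. The quantity in \cref{eq:semidiscrete_water_mass_balance} is therefore $\int_0^L$ of the piecewise polynomial water content. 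Elements not involved in a mesh update keep their nodal values, so it suffices to examine a single refined parent or a single coarsened sibling pair.

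\emph{Refinement.} Let the parent have Jacobian $J_k$, so each child has Jacobian $J_k/2$. The child polynomials in \cref{eq:adaptive_refinement_restriction} are restrictions of $\phi_k^N$ and hence still lie in $\P_N$. Their new nodal values therefore interpolate them exactly, and the LGL sums again equal exact integrals. The change of variables $\xi=(\zeta\mp1)/2$ gives
\[
\frac{J_k}{2}\int_{-1}^1\phi_{k,\mathrm L}^N(\zeta)\dd\zeta+\frac{J_k}{2}\int_{-1}^1\phi_{k,\mathrm R}^N(\zeta)\dd\zeta
=J_k\int_{-1}^0\phi_k^N\dd\xi+J_k\int_0^1\phi_k^N\dd\xi=J_k\int_{-1}^1\phi_k^N\dd\xi .
\]

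\emph{Coarsening.} Take $v\equiv1\in\P_N$ in \cref{eq:adaptive_coarsening_projection}. This gives $\int_{-1}^1\phi_k^N\dd\xi=\tfrac12\int\phi_{k,\mathrm L}^N\dd\zeta+\tfrac12\int\phi_{k,\mathrm R}^N\dd\zeta$, and multiplying by the parent Jacobian $J_k$ reproduces the children's total $\tfrac{J_k}{2}(\cdots)$. The projection is computed with $(N+1)$-point Gauss quadrature, which is exact for degree $2N+1$. The products $\phi v$ with $v\in\P_N$ have degree at most $2N$, so the computed projection satisfies the identity exactly. The resulting parent polynomial lies in $\P_N$, so its LGL nodal sum again equals its integral.

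The main obstacle is bookkeeping rather than analysis. The key point is that exactness holds because every polynomial being integrated has degree at most $N$, whereas LGL quadrature is not exact for the nonlinear quantities that appear elsewhere. For the pressure-head formulation, one must also say what "preserves the total water storage" means after the update, since the method stores $\pressurehead$ rather than $\theta$. I would state the result for the transferred water-content nodal values. Recovering $\pressurehead$ by pointwise inversion $\watercontent^{-1}$ at the new nodes then reproduces these values, because $\watercontent$ is strictly monotone. This requires the transferred values to stay in the range of $\watercontent$, and I would flag that as an assumption.
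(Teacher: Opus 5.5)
Your proposal is correct and follows essentially the same argument as the paper. Both use exactness of LGL quadrature for the degree-$N$ water-content polynomials, the change of variables $\xi=(\zeta\mp1)/2$ for refinement, and the test function $v=1$ in the $L^2$ projection for coarsening, with untouched elements contributing unchanged storage. Your extra remarks go slightly beyond the paper's proof but are consistent with it: the exactness of the $(N+1)$-point Gauss rule for the degree-$2N$ products matches the paper's text just before the theorem, and the admissibility assumption for recovering $\psi$ via $\vartheta^{-1}$ matches the remark that follows it.
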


\begin{proof}
  First, we consider a parent element that is refined into two child elements.
  Denoting the Jacobian of the parent-element mapping by $J_k$, noting that the Jacobians of the child-element mappings are given by $J_{k,\mathrm L} = J_{k,\mathrm R} = J_k/2$, and using the fact that the polynomials $\theta_k^N(\cdot,t), \theta_{k,\mathrm L}^N(\cdot,t), \theta_{k,\mathrm R}^N(\cdot,t) \in \P_N([-1,1])$ are all integrated exactly by the LGL quadrature rule, we obtain the total water storage within the two child elements after refinement as
  \begin{equation}
    \begin{aligned}
      J_{k,\mathrm L}\vc{1}^\T\mat{W}\vc{\theta}_{k,\mathrm L}(t) &+ J_{k,\mathrm R}\vc{1}^\T\mat{W}\vc{\theta}_{k,\mathrm R}(t) \\
      &= \frac{J_k}{2}\int_{-1}^1\theta_{k,\mathrm L}^N(\zeta,t)\dd\zeta
      + \frac{J_k}{2}\int_{-1}^1\theta_{k,\mathrm R}^N(\zeta,t)\dd\zeta \\
      &= \frac{J_k}{2}\int_{-1}^1
      \theta_k^N\left(\frac{\zeta-1}{2},t\right)\dd\zeta
      + \frac{J_k}{2}\int_{-1}^1
      \theta_k^N\left(\frac{\zeta+1}{2},t\right)\dd\zeta \\
      &= J_k\int_{-1}^0\theta_k^N(\xi,t)\dd\xi + J_k\int_0^1\theta_k^N(\xi,t)\dd\xi \\
      &= J_k\int_{-1}^1\theta_k^N(\xi,t)\dd\xi \\
      &= J_k\vc{1}^\T\mat{W}\vc{\theta}_k(t),
    \end{aligned}
  \end{equation}
  which is the total water storage within the parent element before refinement.
  Second, we consider a pair of sibling elements that are combined under coarsening to form a single parent element.
  Setting $v=1$ and taking the water content as the transfer variable in \cref{eq:adaptive_coarsening_projection} gives
  \begin{equation}
    \int_{-1}^1\theta_k^N(\xi,t)\dd\xi =
    \frac{1}{2}\int_{-1}^1\theta_{k,\mathrm L}^N(\zeta,t)\dd\zeta +
    \frac{1}{2}\int_{-1}^1\theta_{k,\mathrm R}^N(\zeta,t)\dd\zeta.
  \end{equation}
  Multiplying both sides by $J_k$ and using the exactness of the LGL quadrature for the polynomial integrands, we therefore obtain
  \begin{equation}
    J_k\vc{1}^\T\mat{W}\vc{\theta}_k(t) =
    J_{k,\mathrm L}\vc{1}^\T\mat{W}\vc{\theta}_{k,\mathrm L}(t)
    + J_{k,\mathrm R}\vc{1}^\T\mat{W}\vc{\theta}_{k,\mathrm R}(t),
  \end{equation}
  which shows that the water storage within the parent element obtained after coarsening is equal to the sum of the child-element storage contributions before coarsening.
  Because the contributions from elements that are neither refined nor coarsened remain unchanged by the mesh update, summing the contributions over all elements before and after the mesh update demonstrates that the total water mass is preserved.
\end{proof}

\begin{remark}
  Because $\vartheta(\psi)$ is strictly increasing for $\psi<0$, its inverse is well defined on the corresponding range of water-content values. 
  However, conservation alone does not ensure that the transferred nodal water-content values lie in the domain of the inverse constitutive relation.
  We assume that this admissibility condition holds after each mesh update and reconstruct the nodal pressure-head vector as $\vc{\psi}_k(t) = (\vartheta^{-1}(\theta_{k,0}(t)),\ldots,\vartheta^{-1}(\theta_{k,N}(t)))^\T$ for every element $k$ on the updated mesh.
\end{remark}

\section{Temporal discretization}\label{sec:temporal_discretization}

By concatenating the elementwise unknowns in \cref{eq:semidiscrete_mixed} or \cref{eq:semidiscrete_pressure_head} to obtain a global state vector $\vc{y}(t)$, we obtain a system of the form
\begin{equation}\label{eq:temporal_mass_matrix_system}
  \mat{M}\dot{\vc{y}}(t) = \vc{\mathcal{F}}(\vc{y}(t),t),
\end{equation}
where, on each fixed mesh between adaptive mesh updates, $\mat{M}$ is a constant mass matrix, which may be singular, and $\vc{\mathcal{F}}$ is a nonlinear function of the state and time.
A mesh update changes the dimensions of the state vector and mass matrix consistently with the updated mesh, which, strictly speaking, yields a new system of the form in \cref{eq:temporal_mass_matrix_system} that is initialized through the transfer procedure described in \cref{sec:conservative_solution_transfer}.
When defining mappings on the state space, we omit the time dependence of the state variables; thus, $\vc{y}$ denotes a generic state, whereas $\vc{y}(t)$ denotes a solution trajectory.
\par
The global state vector for the mixed formulation concatenates the global water-content and pressure-head vectors, which are defined, respectively, as
\begin{equation}\label{eq:global_solution_vectors}
  \vc{\Theta}(t) \coloneqq
  \begin{pmatrix}
    \vc{\theta}_1(t) \\
    \vdots \\
    \vc{\theta}_K(t)
  \end{pmatrix},
  \quad
  \vc{\Psi}(t) \coloneqq
  \begin{pmatrix}
    \vc{\pressurehead}_1(t) \\
    \vdots \\
    \vc{\pressurehead}_K(t)
  \end{pmatrix}.
\end{equation}
Letting $\mat{I}$ denote the identity matrix of size $K(N+1)$ and $\mat{0}$ denote the zero matrix of the same dimensions, the resulting DAE system is then given by \cref{eq:temporal_mass_matrix_system} with
\begin{equation}\label{eq:temporal_mixed_time_data}
  \vc{y} =
  \begin{pmatrix}
    \vc{\Theta} \\
    \vc{\Psi}
  \end{pmatrix},
  \quad
  \mat{M} =
  \begin{pmatrix}
    \mat{I} & \mat{0} \\
    \mat{0} & \mat{0}
  \end{pmatrix},
  \quad
  \vc{\mathcal{F}}(\vc{y},t) =
  \begin{pmatrix}
    \boldsymbol{\mathcal{R}}(\vc{\Psi},t) \\
    \vc{\Theta} - \vc{\vartheta}(\vc{\Psi})
  \end{pmatrix},
\end{equation}
where $\boldsymbol{\mathcal{R}}(\vc{\Psi},t)$ is the spatial residual obtained by computing the gradient approximation on each element using \cref{eq:semidiscrete_mixed_gradient} and using the gradient values to evaluate the elementwise residuals in \cref{eq:semidiscrete_mixed_balance}.
The vector $\vc{\vartheta}(\vc{\Psi})$ is obtained by applying the constitutive relation to each entry in the global pressure-head vector $\vc{\Psi}$, and we initialize the water-content vector as $\vc{\Theta}(0) = \vc{\vartheta}(\vc{\Psi}(0))$, so that the algebraic constraint is satisfied exactly at the beginning of the simulation. 
Because the Jacobian of the algebraic block of $\vc{\mathcal{F}}(\vc{y},t)$ with respect to the vector $\vc{\Psi}$ of algebraic variables is given by $-\mat{C}(\vc{\Psi})$, the DAE system is of index one under the assumption that all nodal capacity values are strictly positive.
\par
For the pressure-head formulation, the state contains only $\vc{\Psi}(t)$.
Under the assumptions in \cref{sec:mathematical_formulation}, all nodal capacity values are strictly positive, so dividing both sides of \cref{eq:semidiscrete_pressure_head_balance} by the nodal values of the capacity function and concatenating contributions from all elements results in an ODE system given by \cref{eq:temporal_mass_matrix_system} with
\begin{equation}\label{eq:temporal_pressure_head_time_data}
  \vc{y} = \vc{\Psi},
  \quad
  \mat{M} = \mat{I},
  \quad
  \vc{\mathcal{F}}(\vc{y},t) =
  \mat{C}(\vc{\Psi})^{-1}\boldsymbol{\mathcal{R}}(\vc{\Psi},t),
\end{equation}
where $\mat{C}(\vc{\Psi})$ is a diagonal matrix with its diagonal entries obtained by evaluating the capacity function for each entry of $\vc{\Psi}$, and $\boldsymbol{\mathcal{R}}(\vc{\Psi},t)$ is the same spatial residual vector as in the mixed formulation.
It is also possible to retain the capacity function on the left-hand side (see, for example, Tocci \etal~\cite{tocci_accurate_economical_1997}), although we do not pursue this approach because it introduces a state-dependent mass matrix.

\subsection{Rosenbrock--Wanner methods}\label{sec:rosenbrock_wanner_methods}

We integrate \cref{eq:temporal_mass_matrix_system} in time using an adaptive Rosenbrock--Wanner method, which is a linearly implicit high-order method for stiff systems of ODEs and index-one DAEs (see, for example, Hairer and Wanner~\cite[Sections~IV.7 and~VI.4]{hairer_solving_1996}).
An $s$-stage embedded Rosenbrock--Wanner method is characterized by the coefficients $\alpha_{ij}$, $\gamma_{ij}$, $b_i$, and $\hat{b}_i$, with
\begin{equation}
  \alpha_{ij}=0 \quad \text{for } j \geq i,
  \quad
  \gamma_{ij}=0 \quad \text{for } j > i,
  \quad
  \gamma_{ii}=\gamma\neq 0.
\end{equation}
Given the solution $\vc{y}^n$ at time $t^n$, the method computes a trial step (which may be accepted or rejected) to advance the solution to time $t^n + \Delta t^n$, where $\Delta t^n$ is the trial step size and $t^0 = 0$, as
\begin{equation}\label{eq:rosenbrock_wanner_update_general}
  \vc{y}^{n+1} = \vc{y}^n + \sum_{i=1}^s b_i\vc{k}_i^n,
  \quad
  \hat{\vc{y}}^{n+1} = \vc{y}^n +
  \sum_{i=1}^s \hat{b}_i\vc{k}_i^n,
\end{equation}
where $\vc{y}^{n+1}$ is the primary approximation and $\hat{\vc{y}}^{n+1}$ is an embedded approximation used for error estimation, both of which are defined on the mesh at time $t^n$.
At each stage $i \in \{1, \ldots, s\}$, the stage solution vector $\vc{k}_i^n$ is obtained by solving the linear system of stage equations
\begin{equation}\label{eq:rosenbrock_wanner_stages}
  \begin{aligned}
    \bigg(
    \mat{M}
    - \gamma\Delta t^n
    \frac{\partial \vc{\mathcal{F}}}{\partial \vc{y}}(\vc{y}^n,t^n)
    \bigg)\vc{k}_i^n
    =  \Delta t^n\vc{\mathcal{F}}
    \bigg(\vc{y}^n + \sum_{j=1}^{i-1}\alpha_{ij}\vc{k}_j^n,
    t^n + \Delta t^n\sum_{j=1}^{i-1}\alpha_{ij}\bigg) \\
    + \ \Delta t^n
    \frac{\partial \vc{\mathcal{F}}}{\partial \vc{y}}(\vc{y}^n,t^n)
    \sum_{j=1}^{i-1}\gamma_{ij}\vc{k}_j^n
    + (\Delta t^n)^2
    \sum_{j=1}^{i}\gamma_{ij}
    \frac{\partial \vc{\mathcal{F}}}{\partial t}(\vc{y}^n,t^n),
  \end{aligned}
\end{equation}
where the right-hand side $\vc{\mathcal{F}}$ is defined using the spatial discretization at time $t^n$, and we note that the Jacobian $\partial \vc{\mathcal{F}}/\partial \vc{y}$ and the time derivative $\partial \vc{\mathcal{F}}/\partial t$ are held fixed across all stages of each trial step.
Although the analysis in this paper is presented in terms of a general $s$-stage method, all numerical experiments use the eight-stage, fifth-order \texttt{Rodas5P} scheme~\cite{steinebach_rodas5p_2023} implemented within the \texttt{OrdinaryDiffEqRosenbrock.jl} package from the \texttt{DifferentialEquations.jl} ecosystem~\cite{rackauckas_differentialequations_2017}.

\subsection{Linear system and Jacobian evaluation}\label{sec:jacobian_evaluation}

The Jacobian matrix for the mixed formulation in \cref{eq:temporal_mixed_time_data} has the block structure
\begin{equation}\label{eq:mixed_residual_jacobian}
  \frac{\partial \vc{\mathcal{F}}}{\partial \vc{y}}(\vc{y}^n,t^n) =
  \begin{pmatrix}
    \mat{0} & (\partial \vc{\mathcal{R}}/\partial \vc{\Psi}) (\vc{\Psi}^n,t^n) \\
    \mat{I} & -\mat{C}(\vc{\Psi}^n)
  \end{pmatrix}.
\end{equation}
The matrix on the left-hand side of \cref{eq:rosenbrock_wanner_stages} is therefore given by
\begin{equation}\label{eq:mixed_rosenbrock_matrix}
  \mat{M}-\gamma\Delta t^n
  \frac{\partial \vc{\mathcal{F}}}{\partial \vc{y}}(\vc{y}^n,t^n) =
  \begin{pmatrix}
    \mat{I}                  & -\gamma\Delta t^n
    (\partial \vc{\mathcal{R}}/\partial \vc{\Psi})(\vc{\Psi}^n,t^n) \\
    -\gamma\Delta t^n\mat{I} & \gamma\Delta t^n\mat{C}(\vc{\Psi}^n)
  \end{pmatrix},
\end{equation}
where all blocks are diagonal except for the upper-right block, which remains sparse due to the local nature of the LDG discretization, with its particular sparsity pattern determined by the elementwise couplings arising from the LDG numerical trace and numerical flux.
The sparse Jacobian is computed using forward-mode automatic differentiation with graph colouring~\cite{revels_forward_mode_automatic_differentiation_2016,montoison_revisiting_sparse_matrix_coloring_2025}, where the sparsity pattern and colouring are specified according to the LDG discretization.
The resulting matrix in \cref{eq:mixed_rosenbrock_matrix} is factorized once per trial step using KLU, a sparse direct LU solver for non-symmetric linear systems~\cite{davis_klu_2010}.
The resulting factorization is then stored and used to compute $\vc{k}_i^n$ for each right-hand side in \cref{eq:rosenbrock_wanner_stages}.
The same sparse factorization approach is used for the pressure-head formulation, in which $\partial \vc{\mathcal{F}}/\partial \vc{y}$ retains the same sparsity pattern as $\partial \vc{\mathcal{R}}/\partial \vc{\Psi}$, because derivatives of the capacity function contribute only to the diagonal entries of the Jacobian.

\begin{remark}\label{rmk:solvability}
  Taking the Schur complement of the upper-left identity block in \cref{eq:mixed_rosenbrock_matrix}, we obtain the matrix $\gamma\Delta t^n\left( \mat{C}(\vc{\Psi}^n) -\gamma\Delta t^n (\partial \vc{\mathcal{R}}/\partial \vc{\Psi})(\vc{\Psi}^n,t^n) \right)$, which is nonsingular when the condition
  \begin{equation}\label{eq:solvability}
    \lvert \gamma \rvert \Delta t^n
    \left\lVert
    \mat{C}(\vc{\Psi}^n)^{-1}
    \frac{\partial \vc{\mathcal{R}}}{\partial \vc{\Psi}}(\vc{\Psi}^n,t^n)
    \right\rVert < 1
  \end{equation}
  holds under any consistent matrix norm (see, for example, Stewart and Sun~\cite[Chapter~III, Theorem~2.5]{stewart_sun_matrix_perturbation_theory_1990}).
  Because \cref{eq:solvability} can be ensured for sufficiently small $\Delta t^n$ when the capacity function is strictly positive, the linear system in \cref{eq:rosenbrock_wanner_stages} is therefore solvable under the same conditions.
  A similar argument can be used to show that the linear system for the pressure-head formulation is solvable for a strictly positive, continuously differentiable capacity function and a sufficiently small time step, although the time step restriction may differ from that of the mixed form.
\end{remark}

\subsection{Adaptive time integration}\label{sec:adaptive_time_integration}

After computing the trial-step update in \cref{eq:rosenbrock_wanner_update_general}, we estimate the temporal error in water content.
For the mixed formulation, let $\vc{\Theta}^{n}$ and $\vc{\Theta}^{n+1}$ denote the water-content blocks of the primary approximation at the corresponding time levels, and let $\hat{\vc{\Theta}}^{n+1}$ denote the water-content block of the embedded approximation, all defined on the mesh at time $t^n$, before any possible mesh adaptation at time $t^{n+1}$.
For the pressure-head formulation, we analogously define $\vc{\Theta}^{n} \coloneqq \vc{\vartheta}(\vc{\Psi}^{n})$, $\vc{\Theta}^{n+1} \coloneqq \vc{\vartheta}(\vc{\Psi}^{n+1})$, and $\hat{\vc{\Theta}}^{n+1} \coloneqq \vc{\vartheta}(\hat{\vc{\Psi}}^{n+1})$, where the constitutive relation is applied componentwise.
As in Ranocha \etal~\cite[Section~2.3]{ranocha_error_based_step_size_2025}, we compute the normalized error estimate for the trial step as
\begin{equation}\label{eq:temporal_error_estimate}
  E^{n+1} \coloneqq \sqrt{\frac{1}{K^n(N+1)} \sum_{j=1}^{K^n(N+1)} \left|
    \frac{\Theta_j^{n+1}-\hat{\Theta}_j^{n+1}} {\texttt{atol} +
      \texttt{rtol}\max\left(\left|\Theta_j^n\right|,\left|\Theta_j^{n+1}\right|\right)}
    \right|^2 },
\end{equation}
where \texttt{atol} and \texttt{rtol} are user-specified absolute and relative tolerances, respectively, and $K^n$ denotes the number of elements in the mesh at time $t^n$.
We accept the trial step and advance the solution to time $t^{n+1}$ only if $E^{n+1}\leq 1$ and every nodal pressure-head value in the primary approximation is negative.
After an accepted step, a proportional-integral controller (see, for example, Gustafsson \etal~\cite{gustafsson_pi_stepsize_1988}) determines a ratio for the step-size update as
\begin{equation}
  r^{n+1} \coloneqq
  \begin{cases}
    r_{\max}^{n+1}, & E^{n+1}=0, \\
    \min\big(
        r_{\max}^{n+1},
        \max\big(r_{\min}^{n+1}, \sigma (E^{n+1})^{-\beta_1}
    \max(E^n,E^0)^{\beta_2}\big)
    \big),          & \text{otherwise},
  \end{cases}
\end{equation}
where $r_{\min}^{n+1} \in (0,1)$ and $r_{\max}^{n+1}\geq 1$ are the minimum and maximum permitted ratios between time steps, $\beta_1, \beta_2 > 0$ are the current-error and previous-error exponents, $\sigma \in (0, 1]$ is a safety factor, and $E^0 > 0$ is a prescribed initial value for the error estimate.
The time step size is then updated as
\begin{equation}
  \Delta t^{n+1}
  \coloneqq
  \begin{cases}
    \Delta t^n,        & r_{\mathrm{db}}^- \leq r^{n+1} \leq r_{\mathrm{db}}^+, \\
    r^{n+1}\Delta t^n, & \text{otherwise},
  \end{cases}
\end{equation}
where $r_{\mathrm{db}}^-$ and $r_{\mathrm{db}}^+$ are the lower and upper thresholds, respectively, of a controller deadband that suppresses small changes in the step size, with $r_{\mathrm{db}}^- \leq 1 \leq r_{\mathrm{db}}^+$.
If the final time would be overshot by the next time step, the step size is accordingly reduced to reach the final time exactly.
If any nodal pressure-head value in the primary approximation is non-negative (\ie a saturated state is reached), we reject the trial step and repeat the update with a smaller step size obtained by multiplying $\Delta t^n$ by $r_{\min}^{n+1}$.
If the pressure-head values are all negative but the normalized error estimate satisfies $E^{n+1}>1$, we similarly reject the trial step but instead multiply $\Delta t^n$ by $\max(r_{\min}^{n+1},\sigma (E^{n+1})^{-\beta_1})$.
An adaptive mesh update is performed after every $n_{\text{adapt}}$ accepted time steps, with the solution transferred to the new mesh following the procedure described in \cref{sec:conservative_solution_transfer}, preserving the current time step size and error estimate across the mesh update for use in the step-size controller.
Although the controller accepts user-specified absolute minimum and maximum step sizes, we do not impose either bound in this work.

\subsection{Fully discrete mass conservation}\label{sec:fully_discrete_mass_conservation}

The proof of fully discrete mass conservation relies on the preservation of linear invariants by the Rosenbrock--Wanner method, which is well known in the context of ODEs (see, for example, Hundsdorfer and Verwer~\cite[Section~II.1.5]{hundsdorfer_numerical_2003}).
The following lemma demonstrates that this property straightforwardly extends to the DAE system in \cref{eq:temporal_mass_matrix_system} for invariants of the form $\vc{c}^\T\mat{M}\vc{y}$.

\begin{lemma}\label{lem:rosenbrock_linear_invariant}
  Consider a system of the form \cref{eq:temporal_mass_matrix_system} with a constant, possibly singular mass matrix $\mat{M}$ and a right-hand side $\vc{\mathcal{F}}(\vc{y},t)$ that is continuously differentiable with respect to $\vc{y}$ and $t$.
  Suppose that there exists a constant vector $\vc{c}$ such that
  \begin{equation}\label{eq:linear_invariant_condition}
    \vc{c}^\T\vc{\mathcal{F}}(\vc{y},t)=0
  \end{equation}
  for all states $\vc{y}$ and times $t$ within the domain of $\vc{\mathcal{F}}$.
  The Rosenbrock--Wanner method defined by the update formulas in \cref{eq:rosenbrock_wanner_update_general} and the stage equations in \cref{eq:rosenbrock_wanner_stages} then preserves the linear invariant $\vc{c}^\T\mat{M}\vc{y}$, provided that the stage equations are solvable.
\end{lemma}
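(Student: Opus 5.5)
The plan is to left-multiply each stage equation in \cref{eq:rosenbrock_wanner_stages} by $\vc{c}^\T$ and show that $\vc{c}^\T\mat{M}\vc{k}_i^n=0$ for every stage $i$. Summing with the weights $b_i$ in \cref{eq:rosenbrock_wanner_update_general} then gives $\vc{c}^\T\mat{M}\vc{y}^{n+1}=\vc{c}^\T\mat{M}\vc{y}^n$. The same argument with $\hat b_i$ handles the embedded solution.

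The first step is to derive consequences of \cref{eq:linear_invariant_condition} for the derivatives of $\vc{\mathcal{F}}$. Since $\vc{c}^\T\vc{\mathcal{F}}(\vc{y},t)=0$ holds identically on the (open) domain and $\vc{\mathcal{F}}$ is continuously differentiable, we can differentiate with respect to $\vc{y}$ and $t$. Because $\vc{c}$ is constant, this gives
\begin{equation*}
  \vc{c}^\T\frac{\partial \vc{\mathcal{F}}}{\partial \vc{y}}(\vc{y},t)=\vc{0}^\T,
  \qquad
  \vc{c}^\T\frac{\partial \vc{\mathcal{F}}}{\partial t}(\vc{y},t)=0,
\end{equation*}
in particular at $(\vc{y}^n,t^n)$.

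Next, apply $\vc{c}^\T$ to the stage equation. On the left, the Jacobian term is annihilated, so
\begin{equation*}
  \vc{c}^\T\Big(\mat{M}-\gamma\Delta t^n\tfrac{\partial\vc{\mathcal{F}}}{\partial\vc{y}}(\vc{y}^n,t^n)\Big)\vc{k}_i^n=\vc{c}^\T\mat{M}\vc{k}_i^n .
\end{equation*}
On the right, three terms appear. The first is $\Delta t^n\vc{c}^\T\vc{\mathcal{F}}$ evaluated at the internal stage state, which vanishes by \cref{eq:linear_invariant_condition}. The second is the Jacobian term acting on $\sum_{j<i}\gamma_{ij}\vc{k}_j^n$, which vanishes by the first step. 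The third, the $\partial\vc{\mathcal{F}}/\partial t$ term, also vanishes by the first step. Hence $\vc{c}^\T\mat{M}\vc{k}_i^n=0$ for each $i$. Note that this requires no induction over stages, since every term vanishes independently of the earlier $\vc{k}_j^n$. Solvability of the stage equations is used only so that the $\vc{k}_i^n$ are well defined.

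The main delicate point is the first step. It requires the identity in \cref{eq:linear_invariant_condition} to hold on an open set containing $(\vc{y}^n,t^n)$, so that differentiation is legitimate. It also requires the internal stage states $\vc{y}^n+\sum_j\alpha_{ij}\vc{k}_j^n$ to lie in the domain of $\vc{\mathcal{F}}$, which we take as implicit in the assumption that the stages are computable. I would state this explicitly, interpreting "domain" as an open set on which $\vc{\mathcal{F}}$ is $C^1$.
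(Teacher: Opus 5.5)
Your proposal is correct and follows the paper's proof essentially step for step. You differentiate the identity to annihilate the Jacobian and $\partial\vc{\mathcal{F}}/\partial t$ terms, left-multiply each stage equation by $\vc{c}^\T$ to obtain $\vc{c}^\T\mat{M}\vc{k}_i^n=0$, and sum with the weights $b_i$. Your remarks on the open domain, the stage states lying in it, and the embedded solution are sensible refinements that the paper leaves implicit.
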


\begin{proof}
  Following \cite{hundsdorfer_numerical_2003}, we differentiate \cref{eq:linear_invariant_condition} with respect to $\vc{y}$ and $t$ to obtain
  \begin{equation}\label{eq:linear_invariant_derivative_conditions}
    \vc{c}^\T
    \frac{\partial \vc{\mathcal{F}}}{\partial \vc{y}}(\vc{y},t)=\vc{0}^\T,
    \quad
    \vc{c}^\T
    \frac{\partial \vc{\mathcal{F}}}{\partial t}(\vc{y},t)=0.
  \end{equation}
  Left-multiplying the stage equations in \cref{eq:rosenbrock_wanner_stages} by $\vc{c}^\T$ and using \cref{eq:linear_invariant_condition,eq:linear_invariant_derivative_conditions}, we see that each stage solution satisfies
  \begin{equation}\label{eq:linear_invariant_stage}
    \vc{c}^\T\mat{M}\vc{k}_i^n=0.
  \end{equation}
  Left-multiplying the update formula for $\vc{y}^{n+1}$ in \cref{eq:rosenbrock_wanner_update_general} by $\vc{c}^\T\mat{M}$ and substituting \cref{eq:linear_invariant_stage}, we therefore obtain $\vc{c}^\T\mat{M}\vc{y}^{n+1}=\vc{c}^\T\mat{M}\vc{y}^n$.
\end{proof}

The following theorem establishes the fully discrete counterpart of \cref{eq:continuous_water_mass_balance} for the mixed-form discontinuous spectral-element discretization with Rosenbrock--Wanner time integration, including nonzero boundary fluxes.

\begin{theorem}\label{thm:fully_discrete_mass_conservation}
  Consider the mixed formulation under the assumptions of \cref{thm:semidiscrete_mass_conservation}, advanced in time using the Rosenbrock--Wanner method defined by \cref{eq:rosenbrock_wanner_update_general,eq:rosenbrock_wanner_stages}, with the adaptivity performed using the water-content solution transfer described in \cref{sec:conservative_solution_transfer}, and assume that the stage equations are solvable at each time step (see \cref{rmk:solvability}).
  To track the cumulative boundary fluxes, we follow \cite{ireson_openre_2023} and introduce the passive SFOM variables $F_{\mathrm T}(t)$ and $F_{\mathrm B}(t)$, which, at the semi-discrete level, satisfy
  \begin{equation}\label{eq:cumulative_boundary_fluxes}
    \dot{F}_{\mathrm T}(t) = f_{\mathrm T}^\star(\vc{\Psi}(t), t), \quad F_{\mathrm T}(0)=0, \quad \dot{F}_{\mathrm B}(t)=f_{\mathrm B}^\star(\vc{\Psi}(t), t), \quad F_{\mathrm B}(0)=0,
  \end{equation}
  where $f_{\mathrm T}^\star(\vc{\Psi}(t),t)$ and $f_{\mathrm B}^\star(\vc{\Psi}(t),t)$ are the numerical flux values at the top and bottom boundaries expressed as functions of the state $\vc{\Psi}(t)$ and time $t$.
  We integrate \cref{eq:cumulative_boundary_fluxes} in time using the same Rosenbrock--Wanner stages, time steps, and step-acceptance decisions used to advance $\vc{y}(t)$ to obtain the discrete cumulative flux values $F_{\mathrm T}^n$ and $F_{\mathrm B}^n$ at time $t^n$, which are not affected by adaptive mesh updates.
  In exact arithmetic, every accepted time step then satisfies
  \begin{equation}\label{eq:fully_discrete_water_mass_balance}
    \sum_{k=1}^{K^{n+1}}J_k^{n+1}\vc{1}^\T\mat{W}\vc{\theta}_k^{n+1}
    - \sum_{k=1}^{K^n}J_k^n\vc{1}^\T\mat{W}\vc{\theta}_k^n
    = \left(F_{\mathrm B}^{n+1}-F_{\mathrm B}^n\right) - \left(F_{\mathrm T}^{n+1}-F_{\mathrm T}^n\right),
  \end{equation}
  where we use $\vc{\theta}_k^n$ to denote the contribution to the evolved water-content block $\vc{\Theta}^n$ from element $k$ on the mesh obtained after any adaptive mesh update at time $t^n$.
\end{theorem}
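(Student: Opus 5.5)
The plan is to augment the mixed-form DAE in \cref{eq:temporal_mixed_time_data} with the two SFOM variables and apply \cref{lem:rosenbrock_linear_invariant} on each fixed mesh. Mesh updates are then handled separately using \cref{thm:conservative_solution_transfer}.

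First, on the mesh in place at time $t^n$ (after any update at $t^n$), define the augmented state $\tilde{\vc{y}} \coloneqq (\vc{\Theta}^\T,\vc{\Psi}^\T,F_{\mathrm T},F_{\mathrm B})^\T$. The augmented mass matrix $\tilde{\mat{M}} \coloneqq \operatorname{diag}(\mat{I},\mat{0},1,1)$ is constant, and the augmented right-hand side $\tilde{\vc{\mathcal{F}}}$ appends $f_{\mathrm T}^\star(\vc{\Psi},t)$ and $f_{\mathrm B}^\star(\vc{\Psi},t)$ to $\vc{\mathcal{F}}$. Since $F_{\mathrm T}$ and $F_{\mathrm B}$ are passive, the augmented Jacobian is block lower triangular with unit diagonal in the new rows. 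Solvability of the augmented stage equations therefore reduces to the assumed solvability of the original ones. The augmented stages coincide with the original stages in the $\vc{y}$-components, so integrating with ``the same stages'' is exactly Rosenbrock--Wanner applied to the augmented system.

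Second, take $\vc{c} \coloneqq (J_1\vc{1}^\T\mat{W},\ldots,J_K\vc{1}^\T\mat{W},\vc{0}^\T,1,-1)^\T$, with the last two entries multiplying $F_{\mathrm T}$ and $F_{\mathrm B}$. Note that \cref{eq:local_water_mass_balance} is an algebraic identity in $\vc{\Psi}$: it uses only $\mat{D}\vc{1}=\vc{0}$ and the structure of \cref{eq:semidiscrete_mixed_balance}, not the time derivative. Summing it and telescoping via single-valued interior fluxes, as in the proof of \cref{thm:semidiscrete_mass_conservation}, gives $\sum_k J_k\vc{1}^\T\mat{W}\vc{\mathcal{R}}_k(\vc{\Psi},t) = f_K^\star - f_0^\star$ for every state and time. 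Identifying $f_0^\star = f_{\mathrm T}^\star$ and $f_K^\star = f_{\mathrm B}^\star$, this yields $\vc{c}^\T\tilde{\vc{\mathcal{F}}}(\tilde{\vc{y}},t) = (f_{\mathrm B}^\star - f_{\mathrm T}^\star) + f_{\mathrm T}^\star - f_{\mathrm B}^\star = 0$. The algebraic block drops out because $\vc{c}$ vanishes on it. \Cref{lem:rosenbrock_linear_invariant} then gives $\vc{c}^\T\tilde{\mat{M}}\tilde{\vc{y}}^{n+1} = \vc{c}^\T\tilde{\mat{M}}\tilde{\vc{y}}^{n}$, which rearranges to \cref{eq:fully_discrete_water_mass_balance} with the storage at $t^{n+1}$ evaluated on the mesh of $t^n$. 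This holds for the accepted step; rejected trial steps are discarded and do not affect the state.

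Third, if a mesh update occurs at $t^{n+1}$, \cref{thm:conservative_solution_transfer} shows that the water-content transfer preserves $\sum_k J_k\vc{1}^\T\mat{W}\vc{\theta}_k$. The quantities $F_{\mathrm T}^{n+1}$ and $F_{\mathrm B}^{n+1}$ are unchanged by construction. Hence replacing the pre-update storage at $t^{n+1}$ with the post-update storage on $K^{n+1}$ elements leaves the identity intact.

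The main obstacle is the bookkeeping in the second step. One must verify that $\vc{c}^\T\tilde{\vc{\mathcal{F}}}\equiv 0$ holds identically in $(\vc{y},t)$, including the Dirichlet penalty terms inside $f_0^\star$ and $f_K^\star$. One must also check that $\tilde{\vc{\mathcal{F}}}$ is continuously differentiable, so that the derivative conditions in the lemma hold. Both follow because the boundary fluxes enter the residual and the SFOM equations through the same functions.
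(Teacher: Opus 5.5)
Your proposal is correct and matches the paper's proof essentially step for step. You augment the mixed DAE with the SFOM variables, apply \cref{lem:rosenbrock_linear_invariant} with the same vector $\vc{c}$, and then use \cref{thm:conservative_solution_transfer} to carry the identity across mesh updates. Your further observation is a useful detail that the paper leaves implicit: the augmented stage matrix is block lower triangular with an identity block in the new rows, so augmented-stage solvability reduces to the assumed solvability of the original stages.
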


\begin{proof}
  Appending the cumulative boundary fluxes in \cref{eq:cumulative_boundary_fluxes} to the mixed-form DAE in \cref{eq:temporal_mixed_time_data} gives an augmented system that takes the form of \cref{eq:temporal_mass_matrix_system} with
  \begin{equation}\label{eq:augmented_mixed_time_data}
    \vc{y} =
    \begin{pmatrix}
      \vc{\Theta} \\
      \vc{\Psi} \\
      F_{\mathrm T} \\
      F_{\mathrm B}
    \end{pmatrix},
    \quad
    \mat{M} = \operatorname{diag}(\mat{I},\mat{0},1,1),
    \quad
    \vc{\mathcal{F}}(\vc{y},t) =
    \begin{pmatrix}
      \boldsymbol{\mathcal{R}}(\vc{\Psi},t) \\
      \vc{\Theta} - \vc{\vartheta}(\vc{\Psi}) \\
      f_{\mathrm T}^\star(\vc{\Psi},t) \\
      f_{\mathrm B}^\star(\vc{\Psi},t)
    \end{pmatrix}.
  \end{equation}
  Consider first the time-integration update from $t^n$ to $t^{n+1}$ on the current mesh, before any scheduled adaptive mesh update at $t^{n+1}$.
  Suppressing time step index superscripts on mesh-related quantities, we define the constant vector
  \begin{equation}\label{eq:water_mass_invariant_vector}
    \vc{c} =
    \begin{pmatrix}
      J_1 \vc{1}^\T \mat{W} & \cdots & J_K \vc{1}^\T \mat{W} & \vc{0}^\T & 1 & -1
    \end{pmatrix}^\T,
  \end{equation}
  where, in this case, $\vc{0}^\T$ denotes a row vector of length $K(N+1)$, indicating the absence of direct contributions from the pressure-head block $\vc{\Psi}$.
  Contracting with the right-hand side $\vc{\mathcal{F}}(\vc{y},t)$, we obtain
  \begin{equation}\label{eq:water_mass_invariant_condition}
    \vc{c}^\T\vc{\mathcal{F}}(\vc{y},t)
    = \begin{pmatrix}
      J_1\vc{1}^\T\mat{W} & \cdots & J_K\vc{1}^\T\mat{W}
    \end{pmatrix}
    \boldsymbol{\mathcal{R}}(\vc{\Psi},t)
    + f_{\mathrm T}^\star(\vc{\Psi},t) - f_{\mathrm B}^\star(\vc{\Psi},t),
  \end{equation}
  which vanishes for every admissible state $\vc{y}$ and time $t$ due to the elementwise balance and interface cancellation established in the proof of \cref{thm:semidiscrete_mass_conservation}.
  Therefore, \cref{lem:rosenbrock_linear_invariant} guarantees that the quantity
  \begin{equation}\label{eq:augmented_water_mass_invariant}
    \vc{c}^\T\mat{M}\vc{y}(t)
    = \sum_{k=1}^{K}J_k\vc{1}^\T\mat{W}\vc{\theta}_k(t) + F_{\mathrm T}(t) - F_{\mathrm B}(t)
  \end{equation}
  is preserved when advancing the solution from $t^n$ to $t^{n+1}$ on a fixed mesh.
  Expanding and rearranging the resulting invariant identity yields \cref{eq:fully_discrete_water_mass_balance} when the solutions at $t^n$ and $t^{n+1}$ are both represented on the mesh at $t^n$, such that $K^{n+1} = K^n$ and $J_k^{n+1} = J_k^n$.
  If a mesh update is performed after advancing the solution to $t^{n+1}$, the mass-conservative solution transfer in \cref{thm:conservative_solution_transfer} preserves the water-content sum, and the cumulative boundary fluxes $F_{\mathrm T}^{n+1}$ and $F_{\mathrm B}^{n+1}$ remain unchanged.
  Therefore, \cref{eq:fully_discrete_water_mass_balance} also holds after the solution transfer to the updated mesh, with $K^{n+1}$ and $J_k^{n+1}$ denoting the number of elements and the Jacobian for element $k$, respectively, on the updated mesh.
\end{proof}

\section{Numerical experiments}\label{sec:numerical_experiments}

The methods described in this paper are implemented in \HydroTrixi,\footnote{The source code is available at \url{https://github.com/uofs-simlab/HydroTrixi.jl}, and the scripts used to generate the numerical results are included in this paper's reproducibility repository.} a Julia package that extends the \Trixi{} numerical simulation framework \cite{ranocha_trixi_2022} to nonlinear parabolic problems for applications in hydrology.
To examine the accuracy, convergence, and mass conservation properties of the mixed and pressure-head formulations described in this work, \HydroTrixi{} is used to solve a series of benchmark problems for the one-dimensional Richards equation, with all simulations using double-precision arithmetic.
The pressure-head formulation is included for comparison purposes; the proposed fully discrete mass-conservative method uses the mixed formulation with water-content solution transfer.

\subsection{Constitutive models}
We consider both the Haverkamp \cite{haverkamp_comparison_infiltration_1977} and van Genuchten--Mualem \cite{van_genuchten_closed_form_1980,mualem_new_model_1976} constitutive models. For the Haverkamp model, the water content and the hydraulic conductivity are given as functions of the pressure head by
\begin{equation}\label{eq:haverkamp_constitutive_model}
  \watercontent(\pressurehead)
  = \theta_{\mathrm{r}} + \frac{\theta_{\mathrm{s}}-\theta_{\mathrm{r}}}
  {1+(a_{\mathrm{H}}\lvert\pressurehead\rvert)^{\beta_{\mathrm{H}}}},
  \quad
  \conductivity(\pressurehead)
  = \frac{\conductivity_{\mathrm{s}}}
  {1+(b_{\mathrm{H}}\lvert\pressurehead\rvert)^{\gamma_{\mathrm{H}}}},
\end{equation}
where $\theta_{\mathrm{r}}$ and $\theta_{\mathrm{s}}$ are the residual and saturated water-content values, respectively, with $\theta_{\mathrm{r}} < \theta_{\mathrm{s}}$, and $\conductivity_{\mathrm{s}}>0$ is the saturated hydraulic conductivity.
The parameters $a_{\mathrm{H}}>0$ and $b_{\mathrm{H}}>0$ are inverse pressure-head scales, and $\beta_{\mathrm{H}}>0$ and $\gamma_{\mathrm{H}}>0$ are the water-retention and hydraulic-conductivity exponents, respectively.
The van Genuchten--Mualem model is given by
\begin{equation}\label{eq:van_genuchten_mualem_constitutive_model}
  \begin{aligned}
    \watercontent(\pressurehead)
    &= \theta_{\mathrm{r}} + \left(\theta_{\mathrm{s}}-\theta_{\mathrm{r}}\right)
    \big[1+(\alpha_{\mathrm{V}}\lvert\pressurehead\rvert)^{n_{\mathrm{V}}}\big]^{-m_{\mathrm{V}}},
 \\
    \conductivity(\pressurehead)
    &= \conductivity_{\mathrm{s}}S_{\mathrm{e}}(\pressurehead)^{\ell_{\mathrm{V}}}
    \big[1-(1-S_{\mathrm{e}}(\pressurehead)^{1/m_{\mathrm{V}}})^{m_{\mathrm{V}}}\big]^2,
  \end{aligned}
\end{equation}
where $S_{\mathrm{e}}(\psi) \coloneqq (\watercontent(\psi)-\theta_{\mathrm{r}})/(\theta_{\mathrm{s}}-\theta_{\mathrm{r}})$ is the effective saturation, $\alpha_{\mathrm{V}}>0$ is an inverse pressure-head scale, $n_{\mathrm{V}}>1$ is a dimensionless water-retention exponent, $\ell_{\mathrm{V}}$ is the dimensionless pore-connectivity parameter, and $m_{\mathrm{V}} \coloneqq 1-1/n_{\mathrm{V}}$ is a derived shape parameter.
The parameter values used in this paper for the Haverkamp and van Genuchten--Mualem models are listed in \cref{tab:constitutive_parameters}.
The Haverkamp values are chosen to reproduce the first infiltration benchmark considered by Celia \etal~\cite{celia_mass_conservative_richards_1990}, whereas the van Genuchten--Mualem values correspond to the New Mexico soil properties used in their second infiltration benchmark.

\begin{table}[t]
  \centering
  \caption{Parameters for the Haverkamp and van Genuchten--Mualem constitutive models used in the numerical experiments.}
  \label{tab:constitutive_parameters}
  \begin{tabular*}{\textwidth}{@{\extracolsep{\fill}}lll@{}}
    \toprule
    Symbol & Description & Value \\
    \midrule
    \multicolumn{3}{l}{\emph{Haverkamp constitutive model}} \\
    $\theta_{\mathrm{r}}$ & Residual water content & $0.075$ \\
    $\theta_{\mathrm{s}}$ & Saturated water content & $0.287$ \\
    $\conductivity_{\mathrm{s}}$ & Saturated conductivity & $9.44\times 10^{-5}\,\mathrm{m}\,\mathrm{s}^{-1}$ \\
    $a_{\mathrm{H}}$ & Inverse pressure-head scale & $2.70740\,\mathrm{m}^{-1}$ \\
    $b_{\mathrm{H}}$ & Inverse pressure-head scale & $5.24084\,\mathrm{m}^{-1}$ \\
    $\beta_{\mathrm{H}}$ & Water-retention exponent & $3.96$ \\
    $\gamma_{\mathrm{H}}$ & Conductivity exponent & $4.74$ \\
    \midrule
    \multicolumn{3}{l}{\emph{van Genuchten--Mualem constitutive model}} \\
    $\theta_{\mathrm{r}}$ & Residual water content & $0.102$ \\
    $\theta_{\mathrm{s}}$ & Saturated water content & $0.368$ \\
    $\conductivity_{\mathrm{s}}$ & Saturated conductivity & $9.22\times 10^{-5}\,\mathrm{m}\,\mathrm{s}^{-1}$ \\
    $\alpha_{\mathrm{V}}$ & Inverse pressure-head scale & $3.35\,\mathrm{m}^{-1}$ \\
    $\ell_{\mathrm{V}}$ & Pore-connectivity parameter & $0.5$ \\
    $n_{\mathrm{V}}$ & Water-retention exponent & $2$ \\
    \bottomrule
  \end{tabular*}
\end{table}

\subsection{Spatial and temporal convergence for a manufactured solution}

We assess spatial and temporal convergence for fixed meshes and time step sizes using two variants of the manufactured-solution problem introduced by Keita \etal~\cite{keita_time_stepping_richards_2021}.
Both variants use the Haverkamp model with the parameters in \cref{tab:constitutive_parameters} and prescribe the exact pressure-head profile
\begin{equation}\label{eq:manufactured_pressure_head}
  \pressurehead(z,t)
  = 0.204\,\mathrm{m}\,\tanh\left[
    \frac{1}{2}\left(100\,\mathrm{m}^{-1}z+\frac{t}{12\,\mathrm{s}}-15\right)
    \right]-0.411\,\mathrm{m},
\end{equation}
on a column of length $L = 0.2\,\mathrm{m}$ for $t\in(0\,\mathrm{s},120\,\mathrm{s}]$.
To produce the manufactured solution, a source term given by
\begin{equation}\label{eq:manufactured_source_term}
  s(z,t)
  \coloneqq
  \capacity(\pressurehead(z,t))
  \partial_t\pressurehead(z,t)
  -\partial_z\left[
    \conductivity(\pressurehead(z,t))
    \left(\partial_z\pressurehead(z,t)-1\right)
    \right],
\end{equation}
is added to the right-hand side of \cref{eq:richards_equation}, where $\pressurehead(z,t)$ is the manufactured solution profile given by \cref{eq:manufactured_pressure_head}.
In both variants, we obtain the initial condition by evaluating this profile at $t=0 \, \mathrm{s}$, and we specify the corresponding exact pressure head $\psi(0 \,\mathrm{m},t)$ as a Dirichlet condition at the top boundary.
At the bottom boundary, the first variant prescribes the exact pressure head $\pressurehead(L,t)$, whereas the second prescribes the exact flux $f(\pressurehead(L,t),\partial_z\pressurehead(L,t))$.
We refer to these variants as the \emph{Dirichlet--Dirichlet} and \emph{Dirichlet--Neumann} problems, respectively.
\par
To examine spatial convergence, we consider fixed polynomial degrees $N \in \{3,4\}$ and perform successive uniform refinements from $l=4$ to $l=8$, corresponding to $K \in \{16, 32, 64, 128, 256\}$.
We use a fixed time step size of $\Delta t = 0.1 \,\mathrm{s}$ to ensure that temporal errors remain small compared to spatial errors.
The nominal mesh size is defined by $\Delta z \coloneqq L/(KN)$, corresponding to the average distance between the LGL quadrature nodes in physical space, and we define the normalized quadrature-based $L^2$ error and the nodal $L^\infty$ error in pressure head at $t = 120 \, \mathrm{s}$ as
\begin{subequations}\label{eq:pressure_head_error_norms}
  \begin{align}
    \text{$L^2$ pressure-head error}
    &\coloneqq
    \sqrt{
      \frac{1}{L}
      \sum_{k=1}^K J_k \sum_{i=0}^N \omega_i
      \left(
      \pressurehead(X_k(\xi_i),t)
      - \pressurehead_k^N(\xi_i,t)
      \right)^2
    }, \label{eq:pressure_head_L2_error} \\
    \text{$L^\infty$ pressure-head error}
    &\coloneqq
    \max_{k=1}^K
    \,
    \max_{i=0}^N
    \left|
    \pressurehead(X_k(\xi_i),t)
    - \pressurehead_k^N(\xi_i,t)
    \right|, \label{eq:pressure_head_Linf_error}
  \end{align}
\end{subequations}
respectively, where both norms return values in the same units as the pressure head (i.e., metres for the examples considered in this paper) under this normalization.
In \cref{fig:manufactured_solution_spatial_convergence}, we plot both error norms against $\Delta z$ for each combination of boundary condition configuration and polynomial degree, using the mixed and pressure-head formulations.
In all cases considered, the $L^2$ and $L^\infty$ error norms both converge at an order of about $N+1$, which is optimal when discretizing a smooth problem using a polynomial approximation of degree $N$.
Moreover, the mixed and pressure-head formulations produce nearly identical pressure-head errors at every refinement level, which is to be expected because they make use of the same semi-discrete spatial operator but differ in the variables that they evolve in time.

\begin{figure}[tp]
  \centering
  \begin{subfigure}{0.5\textwidth}
    \includegraphics[width=0.96\textwidth]{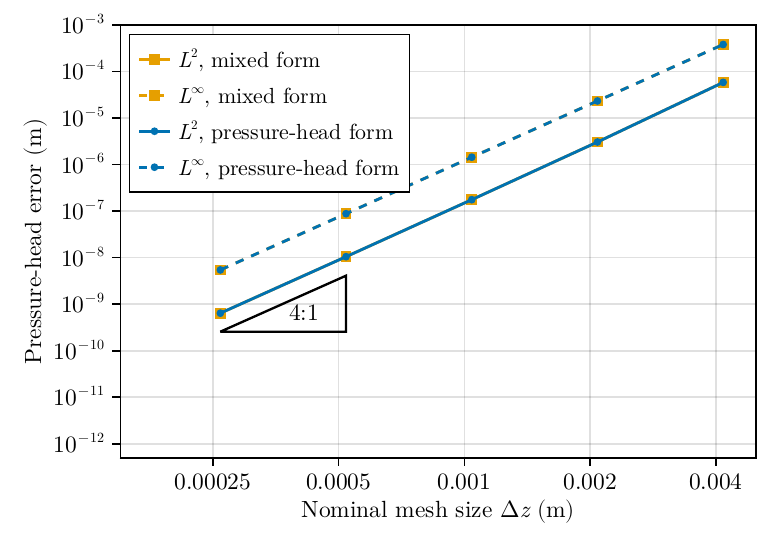}
    \subcaption{Dirichlet--Dirichlet, $N = 3$}\label{fig:manufactured_solution_dirichlet_dirichlet_spatial_convergence_N3}
  \end{subfigure}\hfill
  \begin{subfigure}{0.5\textwidth}
    \includegraphics[width=0.96\textwidth]{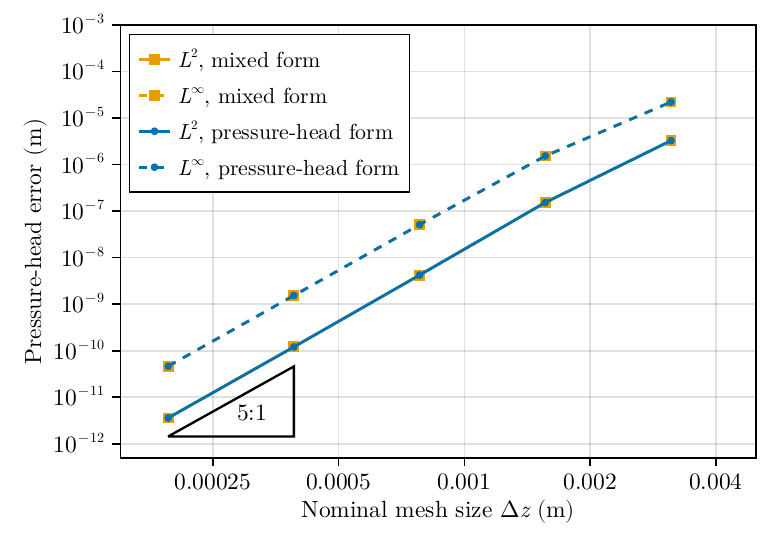}
    \subcaption{Dirichlet--Dirichlet, $N = 4$}\label{fig:manufactured_solution_dirichlet_dirichlet_spatial_convergence_N4}
  \end{subfigure} \\
  \begin{subfigure}{0.5\textwidth}
    \includegraphics[width=0.96\textwidth]{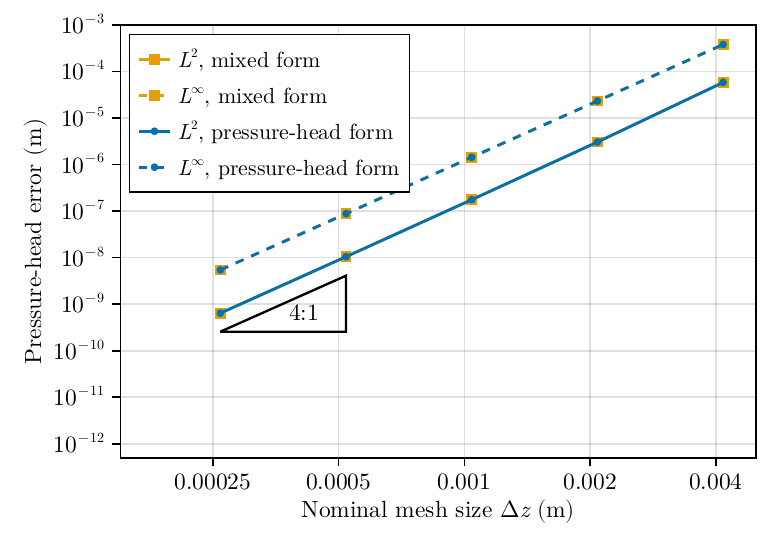}
    \subcaption{Dirichlet--Neumann, $N = 3$}\label{fig:manufactured_solution_dirichlet_neumann_spatial_convergence_N3}
  \end{subfigure}\hfill
  \begin{subfigure}{0.5\textwidth}
    \includegraphics[width=0.96\textwidth]{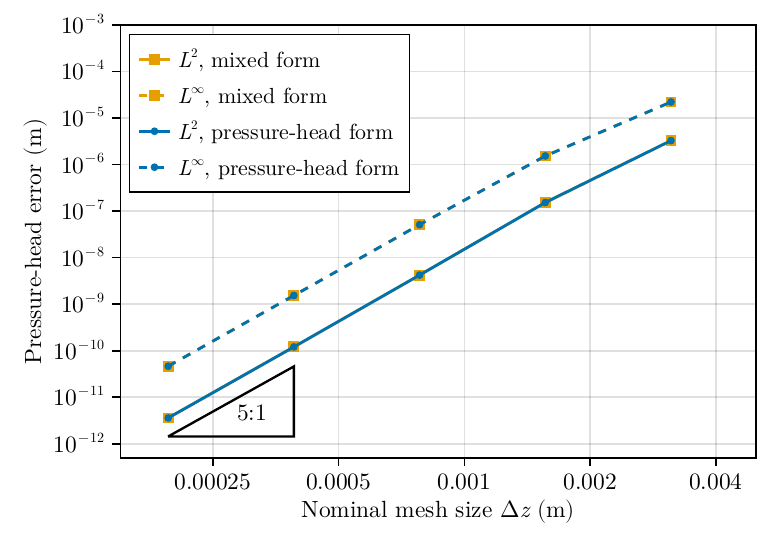}
    \subcaption{Dirichlet--Neumann, $N = 4$}\label{fig:manufactured_solution_dirichlet_neumann_spatial_convergence_N4}
  \end{subfigure}%
  \caption{Spatial convergence of the pressure-head error at $t=120\,\mathrm{s}$ for the manufactured-solution problem using a fixed time step size of $\Delta t=0.1\,\mathrm{s}$.}\label{fig:manufactured_solution_spatial_convergence}
\end{figure}

To assess the effect of the choice between the mixed formulation and the pressure-head formulation on the accuracy of the scheme, we consider a fixed spatial discretization with $N=3$ and $K=4096$, which is fine enough to ensure that the spatial error is negligible compared to the temporal error, and we successively halve the time step to observe the temporal convergence for both formulations.
The convergence of the pressure-head error for both boundary condition configurations is plotted in \cref{fig:manufactured_solution_temporal_convergence} for fixed time step sizes ranging from $\Delta t = 4\,\mathrm{s}$ to $\Delta t = 0.25 \,\mathrm{s}$.
Between the smallest and second-smallest time step sizes, both formulations exhibit temporal rates of convergence between fourth and fifth order, consistent with the reduction in order noted in \cite{steinebach_rodas5p_2023} when applying the nominally fifth-order \texttt{Rodas5P} scheme to stiff problems.
Although the order reduction is stronger for the mixed formulation than for the pressure-head formulation at smaller time step sizes, the mixed formulation is nevertheless more accurate for the entire range of time step sizes considered.

\begin{figure}[tp]
  \centering
  \begin{subfigure}{0.5\textwidth}
    \includegraphics[width=0.96\textwidth]{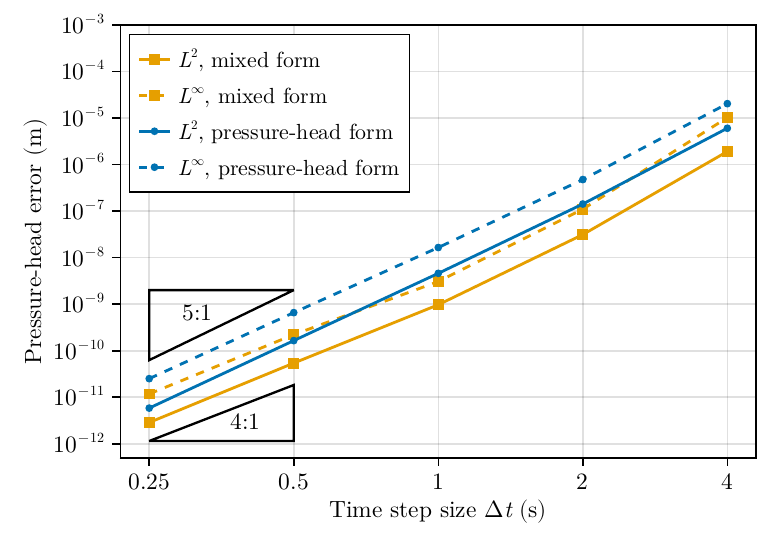}
    \subcaption{Dirichlet--Dirichlet}\label{fig:manufactured_solution_dirichlet_dirichlet_temporal_convergence}
  \end{subfigure}\hfill
  \begin{subfigure}{0.5\textwidth}
    \includegraphics[width=0.96\textwidth]{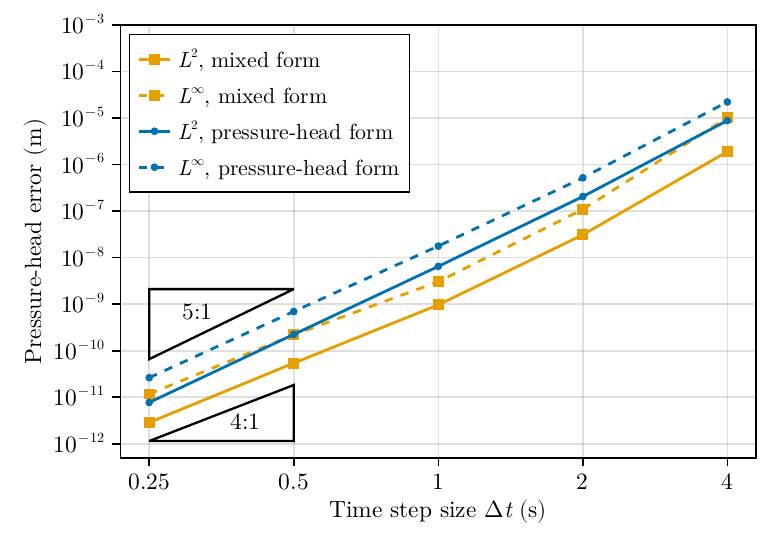}
    \subcaption{Dirichlet--Neumann}\label{fig:manufactured_solution_dirichlet_neumann_temporal_convergence}
  \end{subfigure}%
  \caption{Temporal convergence of the pressure-head error at $t=120\,\mathrm{s}$ for the manufactured-solution problem using a fixed spatial discretization with $N=3$ and $K=4096$.}\label{fig:manufactured_solution_temporal_convergence}
\end{figure}

\subsection{Adaptive simulations of infiltration benchmark problems}

To assess mass conservation under spatial and temporal adaptivity, we consider the two infiltration problems described in \cite{celia_mass_conservative_richards_1990}.
For the first problem, adapted from the sand infiltration case considered in \cite{haverkamp_comparison_infiltration_1977}, we use the Haverkamp parameters in \cref{tab:constitutive_parameters} for a soil column of length $L=0.4\,\mathrm{m}$ and advance the solution until a final time of $t=360\,\mathrm{s}$.
The initial condition is given by $\pressurehead(z,0\,\mathrm{s})=-0.615\,\mathrm{m}$ for $z\in[0\,\mathrm{m},0.4\,\mathrm{m}]$, and the time-independent Dirichlet boundary conditions are given by $\pressurehead(0\,\mathrm{m},t)=-0.207\,\mathrm{m}$ and $\pressurehead(L,t)=-0.615\,\mathrm{m}$ for $t\in(0\,\mathrm{s},360\,\mathrm{s}]$.
We also consider the second infiltration problem from \cite{celia_mass_conservative_richards_1990}, which uses the van Genuchten--Mualem parameters in \cref{tab:constitutive_parameters} for a soil column of length $L=1\,\mathrm{m}$,\footnote{The domain used here matches the depth range shown in \cite[Figure~3]{celia_mass_conservative_richards_1990}, whereas the accompanying text places the lower boundary at a depth of $0.6\,\mathrm{m}$.} advancing until a final time of $t=86400\,\mathrm{s}$.
The initial condition is given by $\pressurehead(z,0\,\mathrm{s})=-10\,\mathrm{m}$ for $z\in[0\,\mathrm{m},1\,\mathrm{m}]$, and the time-independent Dirichlet boundary conditions are given by $\pressurehead(0\,\mathrm{m},t)=-0.75\,\mathrm{m}$ and $\pressurehead(1\,\mathrm{m},t)=-10\,\mathrm{m}$ for $t\in(0\,\mathrm{s},86400\,\mathrm{s}]$.
For ease of reference, we call these the \emph{Haverkamp infiltration problem} and the \emph{New Mexico infiltration problem}, respectively, with the latter name reflecting that its soil parameters approximate the properties of a field site in New Mexico~\cite{celia_mass_conservative_richards_1990}.
The discretization and adaptivity parameters used for both infiltration problems are summarized in \cref{tab:infiltration_adaptivity_parameters}.
We do not claim that these choices are optimal for either problem, but they nevertheless provide realistic settings for assessing mass conservation under spatial and temporal adaptivity, which, as a consequence of \cref{thm:fully_discrete_mass_conservation}, is expected to hold for the mixed formulation with water-content solution transfer independently of the specific adaptivity parameters chosen.
\begin{table}[t]
  \centering
  \caption{Discretization and adaptivity parameters for the Haverkamp and New Mexico infiltration problems.}
  \label{tab:infiltration_adaptivity_parameters}
  \begin{tabular*}{\textwidth}{@{\extracolsep{\fill}}lll@{}}
    \toprule
    Symbol & Description & Value \\
    \midrule
    \multicolumn{3}{l}{\emph{Spatial discretization and adaptivity}} \\
    $N$ & Polynomial degree & $3$ \\
    $l_{\mathrm{init}}$ & Initial refinement level & $6$ \\
    $l_{\min}$ & Minimum refinement level & $2$ \\
    $l_{\max}$ & Maximum refinement level & $10$ \\
    $\eta_{\mathrm{c}}$ & Coarsening threshold & $0.003$ \\
    $\eta_{\mathrm{r}}$ & Refinement threshold & $0.03$ \\
    $\epsilon$ & Indicator regularization parameter & $10^{-11}$ \\
    $n_{\mathrm{adapt}}$ & Accepted time steps between mesh updates & $10$ \\
    \midrule
    \multicolumn{3}{l}{\emph{Temporal discretization and adaptivity}} \\
    $\beta_1$ & Current-error exponent & $0.14$ \\
    $\beta_2$ & Previous-error exponent & $0.08$ \\
    $\sigma$ & Safety factor & $0.9$ \\
    $E^0$ & Initial error value & $10^{-4}$ \\
    $r_{\min}^n$ & Minimum step ratio ($n\geq 1$) & $0.2$ \\
    $r_{\max}^1$ & Initial maximum step ratio & $10^4$ \\
    $r_{\max}^n$ & Maximum step ratio ($n\geq 2$) & $10$ \\
    $r_{\mathrm{db}}^-$ & Lower deadband threshold & $5/6$ \\
    $r_{\mathrm{db}}^+$ & Upper deadband threshold & $1$ \\
    $\Delta t^0$ & Initial time step & $10^{-2}\,\mathrm{s}$ \\
    $\texttt{atol}$ & Absolute temporal error tolerance & $10^{-11}$ \\
    $\texttt{rtol}$ & Relative temporal error tolerance & $10^{-5}$, $10^{-7}$, $10^{-9}$ \\
    \bottomrule
  \end{tabular*}
\end{table}

\Cref{fig:infiltration_profiles} shows the solution profiles for the Haverkamp and New Mexico infiltration problems at intermediate and final times, using the mixed formulation with water-content solution transfer and a relative temporal error tolerance of $10^{-7}$.
The grey vertical lines indicate element boundaries, and we see that using the total-variation-based refinement indicator results in a mesh that adapts to the evolving solution and effectively tracks the moving front as infiltration progresses.
The time step histories for both problems are shown in \cref{fig:infiltration_time_steps} for relative tolerances of $10^{-5}$, $10^{-7}$, and $10^{-9}$.
As expected, the accepted time step sizes are smaller for tighter relative tolerances.
Sharp decreases in accepted time step sizes are observed immediately following mesh adaptation events.
These decreases are consistent with transient temporal error introduced by transferring the solution between meshes.
Similar interactions between temporal error control and mesh adaptivity have been noted in other investigations, with Dolejší and May~\cite{dolejsi_anisotropic_hp_mesh_2023} introducing an automatic reduction of the time step following mesh adaptation to avoid rejected steps.

\begin{figure}[t]
  \centering
  \begin{subfigure}{0.5\textwidth}
    \includegraphics[width=0.96\textwidth]{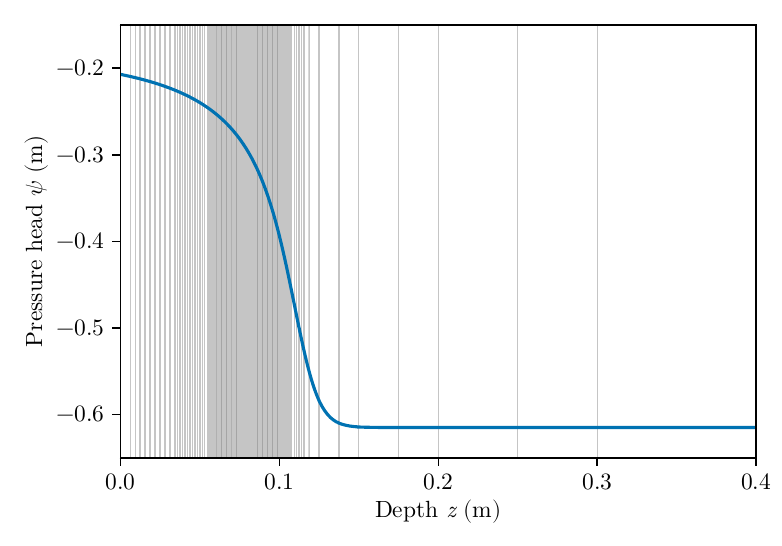}
    \subcaption{Haverkamp, $t=180\,\mathrm{s}$}\label{fig:haverkamp_infiltration_profile_half}
  \end{subfigure}\hfill
  \begin{subfigure}{0.5\textwidth}
    \includegraphics[width=0.96\textwidth]{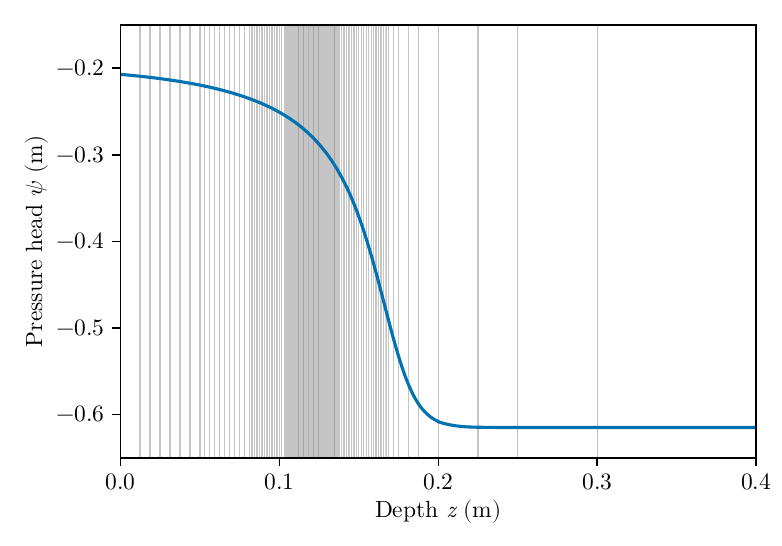}
    \subcaption{Haverkamp, $t=360\,\mathrm{s}$}\label{fig:haverkamp_infiltration_profile_final}
  \end{subfigure} \\
  \begin{subfigure}{0.5\textwidth}
    \includegraphics[width=0.96\textwidth]{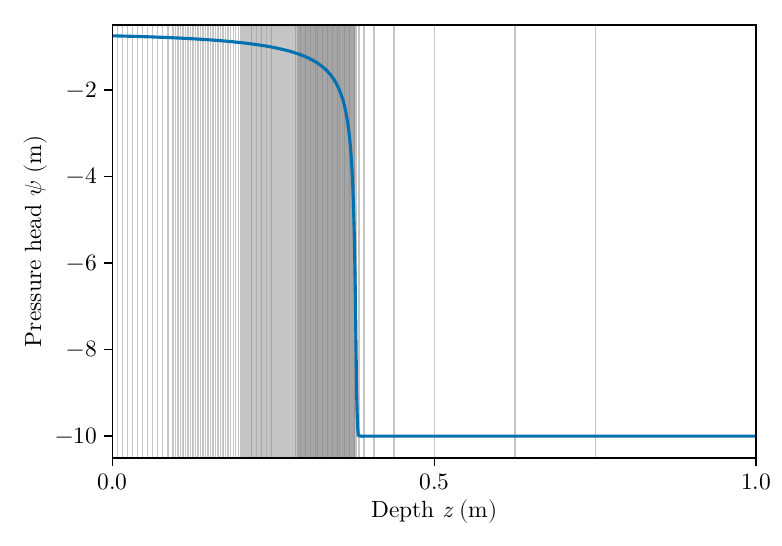}
    \subcaption{New Mexico, $t=43200\,\mathrm{s}$}\label{fig:new_mexico_infiltration_profile_half}
  \end{subfigure}\hfill
  \begin{subfigure}{0.5\textwidth}
    \includegraphics[width=0.96\textwidth]{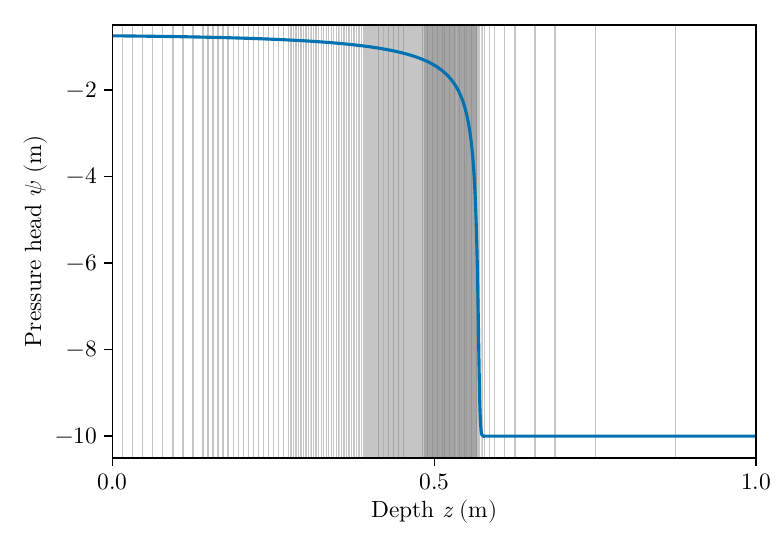}
    \subcaption{New Mexico, $t=86400\,\mathrm{s}$}\label{fig:new_mexico_infiltration_profile_final}
  \end{subfigure}%
  \caption{Pressure-head profiles and element boundaries for the Haverkamp and New Mexico infiltration problems using the mixed formulation with water-content solution transfer and a relative temporal error tolerance of $10^{-7}$.
  }\label{fig:infiltration_profiles}
\end{figure}

\begin{figure}[tp]
  \centering
  \begin{subfigure}{0.5\textwidth}
    \includegraphics[width=0.96\textwidth]{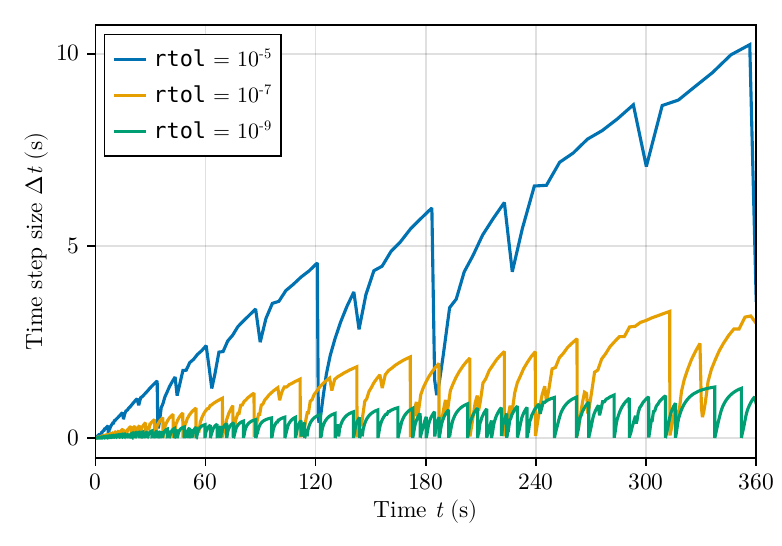}
    \subcaption{Haverkamp}\label{fig:haverkamp_infiltration_time_steps}
  \end{subfigure}\hfill
  \begin{subfigure}{0.5\textwidth}
    \includegraphics[width=0.96\textwidth]{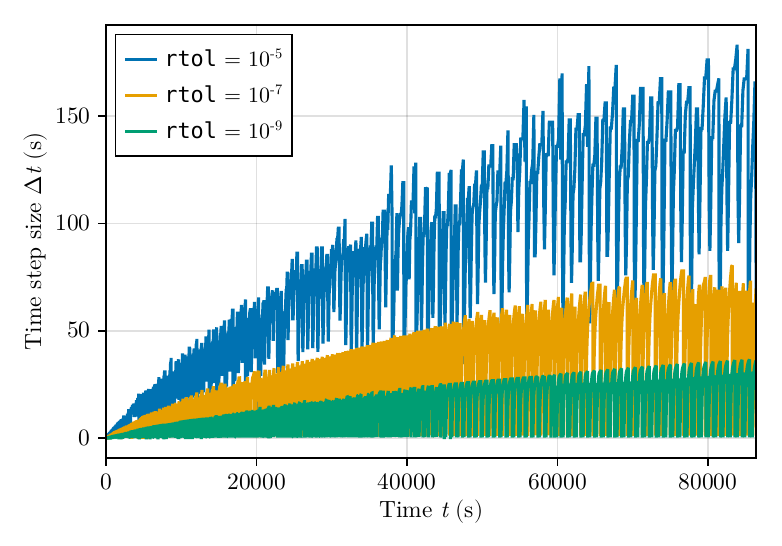}
    \subcaption{New Mexico}\label{fig:new_mexico_infiltration_time_steps}
  \end{subfigure}%
  \caption{Accepted time step sizes for the Haverkamp and New Mexico infiltration problems using the mixed formulation with water-content solution transfer and three different relative temporal error tolerances.}\label{fig:infiltration_time_steps}
\end{figure}

\begin{figure}[tp]
  \centering
  \begin{subfigure}{0.5\textwidth}
    \includegraphics[width=0.96\textwidth]{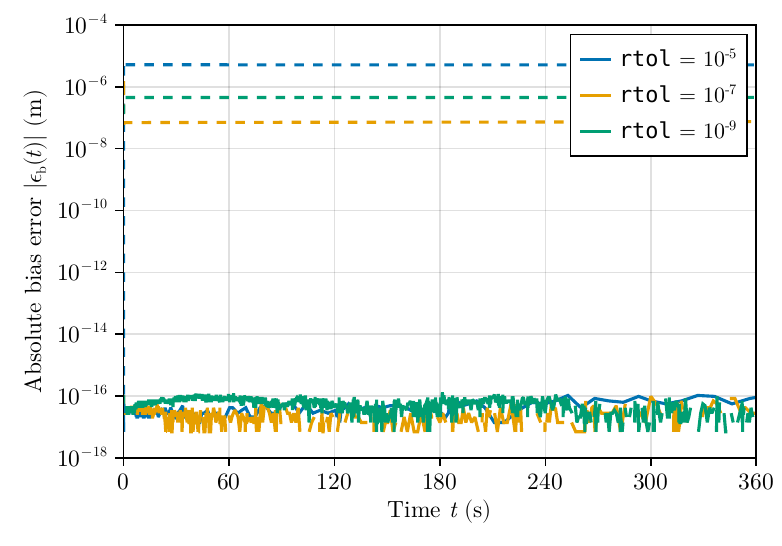}
    \subcaption{Haverkamp, mixed formulation}\label{fig:haverkamp_mixed_mass_bias}
  \end{subfigure}\hfill
  \begin{subfigure}{0.5\textwidth}
    \includegraphics[width=0.96\textwidth]{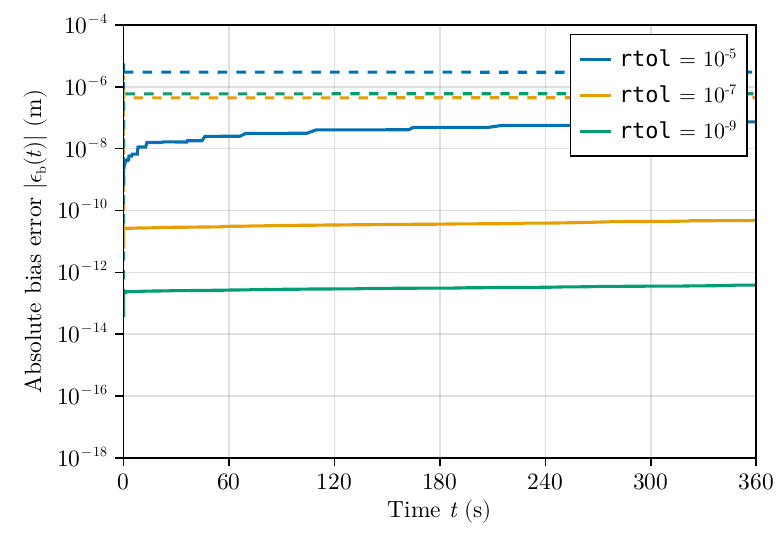}
    \subcaption{Haverkamp, pressure-head formulation}\label{fig:haverkamp_pressure_head_mass_bias}
  \end{subfigure} \\
  \begin{subfigure}{0.5\textwidth}
    \includegraphics[width=0.96\textwidth]{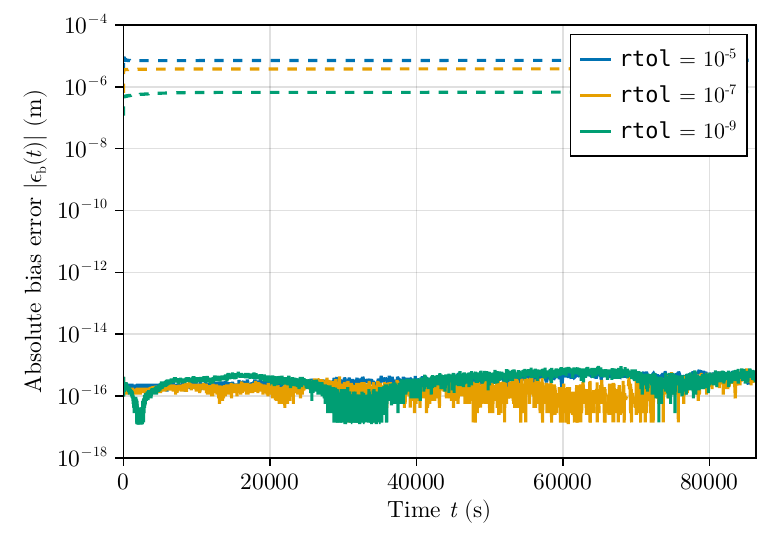}
    \subcaption{New Mexico, mixed formulation}\label{fig:new_mexico_mixed_mass_bias}
  \end{subfigure}\hfill
  \begin{subfigure}{0.5\textwidth}
    \includegraphics[width=0.96\textwidth]{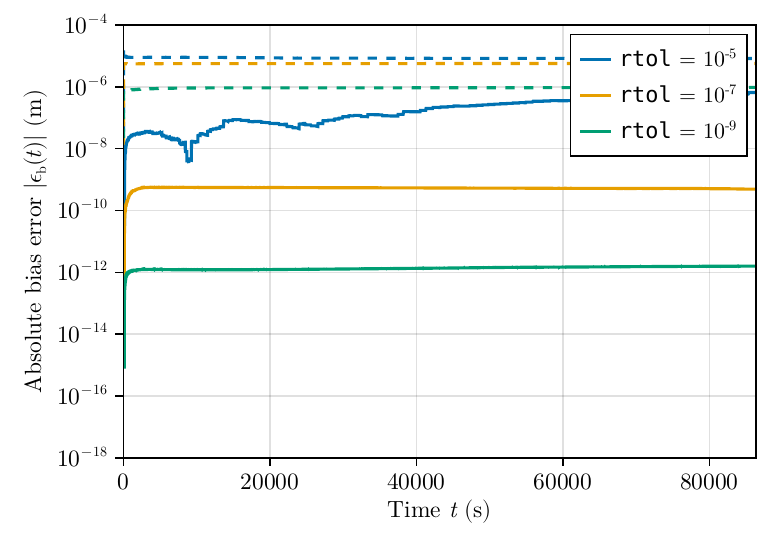}
    \subcaption{New Mexico, pressure-head formulation}\label{fig:new_mexico_pressure_head_mass_bias}
  \end{subfigure}%
  \caption{Evolution of the absolute bias error for the Haverkamp and New Mexico infiltration problems with three different relative temporal error tolerances.
    Solid lines denote water-content solution transfer, with gaps indicating bias errors that are zero in double precision; dashed lines denote pressure-head solution transfer.
  }\label{fig:infiltration_mass_bias}
\end{figure}

Following \cite{ireson_openre_2023}, we quantify mass conservation using the \emph{bias error} and plot its absolute value in \cref{fig:infiltration_mass_bias} for each combination of formulation (i.e., mixed form or pressure-head form) and solution transfer variable (i.e., water content or pressure head).
Using the SFOM variables and notation of \cref{sec:fully_discrete_mass_conservation}, the fully discrete bias error at time $t^n$ is given by
\begin{equation}\label{eq:fully_discrete_mass_bias}
  \begin{aligned}
    \epsilon_{\mathrm{b}}(t^n)
    &\coloneqq
    \left(F_{\mathrm B}^n-F_{\mathrm B}^0\right) - \left(F_{\mathrm T}^n-F_{\mathrm T}^0\right) - \left(
    \sum_{k=1}^{K^n}J_k^n\vc{1}^\T\mat{W}\vc{\theta}_k^n
    - \sum_{k=1}^{K^0}J_k^0\vc{1}^\T\mat{W}\vc{\theta}_k^0
    \right),
  \end{aligned}
\end{equation}
where the water-content vectors $\vc{\theta}_k^0$ and $\vc{\theta}_k^n$ are evaluated on the respective meshes at initialization and at time $t^n$, respectively.
For the mixed formulation, these vectors are obtained from the evolved water-content block of the DAE solution, whereas for the pressure-head formulation, they are computed from the pressure-head ODE solution using the constitutive relation.
As expected from \cref{thm:fully_discrete_mass_conservation}, the mixed formulation preserves the mass balance to roundoff in both problems for all three relative tolerances, provided that the solution transfer is performed using the water content following the conservative transfer procedure described in \cref{thm:conservative_solution_transfer}; transferring the pressure head instead of the water content results in a loss of mass conservation.
For the pressure-head formulation, transferring the water content reduces the mass bias relative to transferring the pressure head, but the bias error does not reach roundoff and remains dependent on the temporal error tolerance.

\section{Conclusions}\label{sec:conclusions}

In this work, we presented a high-order discretization of the one-dimensional Richards equation that supports adaptivity in space and time and ensures mass conservation throughout the simulation.
The method combines a collocated LDG spatial discretization of the mixed-form Richards equation with a high-order Rosenbrock--Wanner time integration scheme and a mass-conservative solution transfer approach.
Numerical experiments based on an open-source Julia implementation confirm high-order spatial and temporal convergence, assess the influence of the temporal formulation on accuracy, and demonstrate the importance of evolving and transferring water content for mass conservation.
\par
This paper provides a foundation for further development of adaptive high-order numerical methods for land models that conserve key physical invariants.
Future work will extend the methodology to more complex hydrologic scenarios, including those involving saturated and near-saturated states, heterogeneous soils, solution-dependent boundary conditions, lateral flow, and coupled mass--energy transport.
Representative test problems, including those proposed by Clark \etal~\cite{clark_laugh_tests_2021}, will provide benchmarks for assessing these extensions, with the central methodological challenge being to retain high-order accuracy and fully discrete mass conservation under spatial and temporal adaptivity while preserving the additional invariants and balance laws of the extended models.

\section*{Acknowledgments}

Funding for this work was provided by the National Oceanic and Atmospheric Administration (NOAA), awarded to the Cooperative Institute for Research to Operations in Hydrology (CIROH) through the NOAA Cooperative Agreement with The University of Alabama, NA22NWS4320003. 
This work was also supported by the Natural Sciences and Engineering Research Council of Canada (NSERC) through a Canada Postdoctoral Research Award.
The statements, findings, conclusions, and recommendations are those of the authors and do not necessarily reflect the views of NOAA or NSERC.
OpenAI Codex and GitHub Copilot were used to assist with the development of the Julia code used for the numerical experiments and with locating relevant literature, typesetting, and manuscript editing.
The authors checked every incorporated suggestion for correctness and take full responsibility for the code and manuscript content.

\section*{Reproducibility of computational results}

The scripts used to generate the results and figures in this paper are provided in the reproducibility repository \cite{montoya_spiteri_adaptive_richards_reproducibility_2026}.

\bibliographystyle{siamplain}
\bibliography{references}

@book{richardson_weather_prediction_1922,
    author = {Richardson, Lewis F.},
    title = {Weather prediction by numerical process},
    publisher = {Cambridge University Press},
    year = {1922}
  }

@article{richards_capillary_1931,
    author = {Richards, Lorenzo A.},
    title = {Capillary conduction of liquids through porous mediums},
    journal = {Physics},
    year = {1931},
    volume = {1},
    number = {5},
    pages = {318--333},
    doi = {10.1063/1.1745010}
  }

@article{celia_mass_conservative_richards_1990,
    author = {Celia, Michael A. and Bouloutas, Efthimios T. and
        Zarba, Rebecca L.},
    title = {A general mass-conservative numerical solution for the unsaturated
        flow equation},
    journal = {Water Resources Research},
    year = {1990},
    volume = {26},
    number = {7},
    pages = {1483--1496},
    doi = {10.1029/WR026i007p01483}
  }

@article{tocci_accurate_economical_1997,
    author = {Tocci, Michael D. and Kelley, C. T. and Miller, Cass T.},
    title = {Accurate and economical solution of the pressure-head form of
          {Richards' equation} by the method of lines},
    journal = {Advances in Water Resources},
    year = {1997},
    volume = {20},
    number = {1},
    pages = {1--14},
    doi = {10.1016/S0309-1708(96)00008-5}
  }

@article{cockburn_ldg_1998,
    author = {Cockburn, Bernardo and Shu, Chi-Wang},
    title = {The local discontinuous {Galerkin} method for time-dependent
        convection-diffusion systems},
    journal = {SIAM Journal on Numerical Analysis},
    year = {1998},
    volume = {35},
    number = {6},
    pages = {2440--2463},
    doi = {10.1137/S0036142997316712}
  }

@article{cockburn_minimal_dissipation_2007,
    author = {Cockburn, Bernardo and Dong, Bo},
    title = {An analysis of the minimal dissipation local discontinuous
          {Galerkin} method for convection-diffusion problems},
    journal = {Journal of Scientific Computing},
    year = {2007},
    volume = {32},
    number = {2},
    pages = {233--262},
    doi = {10.1007/s10915-007-9130-3}
  }

@article{dolejsi_anisotropic_hp_mesh_2023,
    author = {Dolej{\v{s}}{\'{i}}, V{\'{i}}t and May, Georg},
    title = {An anisotropic {$hp$}-mesh adaptation method for time-dependent
        problems based on interpolation error control},
    journal = {Journal of Scientific Computing},
    year = {2023},
    volume = {95},
    number = {2},
    eid = {36},
    doi = {10.1007/s10915-023-02153-1}
  }

@article{miller_spatially_temporally_adaptive_2006,
    author = {Miller, Cass T. and Abhishek, Chandra and Farthing, Matthew W.},
    title = {A spatially and temporally adaptive solution of
          {Richards' equation}},
    journal = {Advances in Water Resources},
    year = {2006},
    volume = {29},
    number = {4},
    pages = {525--545},
    doi = {10.1016/j.advwatres.2005.06.008}
  }

@article{li_adaptive_ldg_2007,
    author = {Li, H. and Farthing, M. W. and Miller, C. T.},
    title = {Adaptive local discontinuous {Galerkin} approximation to
          {Richards' equation}},
    journal = {Advances in Water Resources},
    year = {2007},
    volume = {30},
    number = {9},
    pages = {1883--1901},
    doi = {10.1016/j.advwatres.2007.02.007}
  }

@article{li_local_ldg_2007,
    author = {Li, H. and Farthing, M. W. and Dawson, C. N. and Miller, C. T.},
    title = {Local discontinuous {Galerkin} approximations to
          {Richards' equation}},
    journal = {Advances in Water Resources},
    year = {2007},
    volume = {30},
    number = {3},
    pages = {555--575},
    doi = {10.1016/j.advwatres.2006.04.011}
  }

@article{dolejsi_adaptive_space_time_2019,
    author = {Dolej{\v{s}}{\'{i}}, V{\'{i}}t and Kuraz, Michal and Solin, Pavel},
    title = {Adaptive higher-order space-time discontinuous {Galerkin} method
        for the computer simulation of variably-saturated porous media flows},
    journal = {Applied Mathematical Modelling},
    year = {2019},
    volume = {72},
    pages = {276--305},
    doi = {10.1016/j.apm.2019.02.037}
  }

@article{dolejsi_error_estimates_adaptivity_2024,
    author = {Dolej{\v{s}}{\'{i}}, V{\'{i}}t and Shin, Hyun-Geun and
        Vlas{\'{a}}k, Miloslav},
    title = {Error estimates and adaptivity of the space-time discontinuous
          {Galerkin} method for solving the {Richards} equation},
    journal = {Journal of Scientific Computing},
    year = {2024},
    volume = {101},
    number = {1},
    eid = {11},
    doi = {10.1007/s10915-024-02650-x}
  }

@article{clement_adaptive_dg_2021,
    author = {Cl{\'e}ment, Jean-Baptiste and Golay, Fr{\'e}d{\'e}ric and
        Ersoy, Mehmet and Sous, Damien},
    title = {An adaptive strategy for discontinuous {Galerkin} simulations of
          {Richards' equation}: application to multi-materials dam wetting},
    journal = {Advances in Water Resources},
    year = {2021},
    volume = {151},
    eid = {103897},
    doi = {10.1016/j.advwatres.2021.103897}
  }

@article{tian_h_adaptive_ldg_2027,
    author = {Tian, L. and Lu, L. and Guo, H. and Cheng, J. and Yang, Y.},
    title = {An {$h$}-adaptive local discontinuous {Galerkin} method for solving
          {Richards' equation} in unsaturated soils},
    journal = {Journal of Computational and Applied Mathematics},
    year = {2027},
    volume = {491},
    eid = {117953},
    doi = {10.1016/j.cam.2026.117953}
  }

@article{gassner_skew_symmetric_dg_2013,
    author = {Gassner, Gregor J.},
    title = {A skew-symmetric discontinuous {Galerkin} spectral element
        discretization and its relation to {SBP-SAT} finite difference
        methods},
    journal = {SIAM Journal on Scientific Computing},
    year = {2013},
    volume = {35},
    number = {3},
    pages = {A1233--A1253},
    doi = {10.1137/120890144}
  }

@article{manzanero_bassi_rebay_2018,
    author = {Manzanero, Juan and Rueda-Ram{\'i}rez, Andr{\'e}s M. and
        Rubio, Gonzalo and Ferrer, Esteban},
    title = {The {Bassi Rebay} 1 scheme is a special case of the symmetric
        interior penalty formulation for discontinuous {Galerkin}
        discretisations with {Gauss--Lobatto} points},
    journal = {Journal of Computational Physics},
    year = {2018},
    volume = {363},
    pages = {1--10},
    doi = {10.1016/j.jcp.2018.02.035}
  }

@article{clark_laugh_tests_2021,
    author = {Clark, Martyn P. and Zolfaghari, Reza and Green, Kevin R. and
        Trim, Sean and Knoben, Wouter J. M. and Bennett, Andrew and
        Nijssen, Bart and Ireson, Andrew and Spiteri, Raymond J.},
    title = {The numerical implementation of land models: Problem formulation
        and laugh tests},
    journal = {Journal of Hydrometeorology},
    year = {2021},
    volume = {22},
    number = {6},
    pages = {1627--1648},
    doi = {10.1175/JHM-D-20-0175.1}
  }

@article{ireson_openre_2023,
    author = {Ireson, Andrew M. and Spiteri, Raymond J. and Clark, Martyn P. and
        Mathias, Simon A.},
    title = {A simple, efficient, mass-conservative approach to solving
          {Richards' equation} ({openRE}, v1.0)},
    journal = {Geoscientific Model Development},
    year = {2023},
    volume = {16},
    number = {2},
    pages = {659--677},
    doi = {10.5194/gmd-16-659-2023}
  }

@article{ranocha_trixi_2022,
    author = {Ranocha, Hendrik and Schlottke-Lakemper, Michael and
        Winters, Andrew R. and Faulhaber, Erik and Chan, Jesse and
        Gassner, Gregor J.},
    title = {Adaptive numerical simulations with {Trixi.jl}: A case study of
          {Julia} for scientific computing},
    journal = {Proceedings of the JuliaCon Conferences},
    year = {2022},
    volume = {1},
    number = {1},
    eid = {77},
    doi = {10.21105/jcon.00077}
  }

@article{ranocha_error_based_step_size_2025,
    author = {Ranocha, Hendrik and Winters, Andrew R. and Castro, Hugo Guillermo
        and Dalcin, Lisandro and Schlottke-Lakemper, Michael and Gassner, Gregor J.
        and Parsani, Matteo},
    title = {On error-based step size control for discontinuous {Galerkin} methods
        for compressible fluid dynamics},
    journal = {Communications on Applied Mathematics and Computation},
    year = {2025},
    volume = {7},
    number = {1},
    pages = {3--39},
    doi = {10.1007/s42967-023-00264-y}
  }

@article{gustafsson_pi_stepsize_1988,
    author = {Gustafsson, Kjell and Lundh, Michael and S{\"{o}}derlind, Gustaf},
    title = {A {PI} stepsize control for the numerical solution of ordinary
        differential equations},
    journal = {BIT Numerical Mathematics},
    year = {1988},
    volume = {28},
    number = {2},
    pages = {270--287},
    doi = {10.1007/BF01934091}
  }

@article{steinebach_rodas5p_2023,
    author = {Steinebach, Gerd},
    title = {Construction of {Rosenbrock--Wanner} method {Rodas5P} and numerical
        benchmarks within the {Julia} {Differential Equations} package},
    journal = {BIT Numerical Mathematics},
    year = {2023},
    volume = {63},
    number = {2},
    eid = {27},
    doi = {10.1007/s10543-023-00967-x}
  }

@article{rackauckas_differentialequations_2017,
    author = {Rackauckas, Christopher and Nie, Qing},
    title = {{DifferentialEquations.jl} -- A performant and feature-rich ecosystem
        for solving differential equations in {Julia}},
    journal = {Journal of Open Research Software},
    year = {2017},
    volume = {5},
    number = {1},
    eid = {15},
    doi = {10.5334/jors.151}
  }

@misc{revels_forward_mode_automatic_differentiation_2016,
    author = {Revels, Jarrett and Lubin, Miles and Papamarkou, Theodore},
    title = {Forward-mode automatic differentiation in {Julia}},
    year = {2016},
    eprint = {1607.07892},
    eprintclass = {cs.MS}
  }

@misc{montoison_revisiting_sparse_matrix_coloring_2025,
    author = {Montoison, Alexis and Dalle, Guillaume and Gebremedhin, Assefaw},
    title = {Revisiting sparse matrix coloring and bicoloring},
    year = {2025},
    eprint = {2505.07308},
    eprintclass = {math.NA}
  }

@article{davis_klu_2010,
    author = {Davis, Timothy A. and Palamadai Natarajan, Ekanathan},
    title = {Algorithm 907: {KLU}, a direct sparse solver for circuit simulation
        problems},
    journal = {ACM Transactions on Mathematical Software},
    year = {2010},
    volume = {37},
    number = {3},
    pages = {36:1--36:17},
    doi = {10.1145/1824801.1824814}
  }

@article{haverkamp_comparison_infiltration_1977,
    author = {Haverkamp, R. and Vauclin, M. and Touma, J. and
        Wierenga, P. J. and Vachaud, G.},
    title = {A comparison of numerical simulation models for one-dimensional
        infiltration},
    journal = {Soil Science Society of America Journal},
    year = {1977},
    volume = {41},
    number = {2},
    pages = {285--294},
    doi = {10.2136/sssaj1977.03615995004100020024x}
  }

@article{van_genuchten_closed_form_1980,
    author = {van Genuchten, M. Th.},
    title = {A closed-form equation for predicting the hydraulic conductivity of
        unsaturated soils},
    journal = {Soil Science Society of America Journal},
    year = {1980},
    volume = {44},
    number = {5},
    pages = {892--898},
    doi = {10.2136/sssaj1980.03615995004400050002x}
  }

@article{mualem_new_model_1976,
    author = {Mualem, Yechezkel},
    title = {A new model for predicting the hydraulic conductivity of unsaturated
        porous media},
    journal = {Water Resources Research},
    year = {1976},
    volume = {12},
    number = {3},
    pages = {513--522},
    doi = {10.1029/WR012i003p00513}
  }

@article{keita_time_stepping_richards_2021,
    author = {Keita, Sana and Beljadid, Abdelaziz and Bourgault, Yves},
    title = {Implicit and semi-implicit second-order time stepping methods for
        the {Richards} equation},
    journal = {Advances in Water Resources},
    year = {2021},
    volume = {148},
    eid = {103841},
    doi = {10.1016/j.advwatres.2020.103841}
  }

@article{kees_higher_order_time_integration_2002,
    author = {Kees, C. E. and Miller, C. T.},
    title = {Higher order time integration methods for two-phase flow},
    journal = {Advances in Water Resources},
    year = {2002},
    volume = {25},
    number = {2},
    pages = {159--177},
    doi = {10.1016/S0309-1708(01)00054-9}
  }

@article{cumming_mass_conservative_2011,
    author = {Cumming, Ben and Moroney, Tim and Turner, Ian},
    title = {A mass-conservative control volume-finite element method for solving
          {Richards'} equation in heterogeneous porous media},
    journal = {BIT Numerical Mathematics},
    year = {2011},
    volume = {51},
    number = {4},
    pages = {845--864},
    doi = {10.1007/s10543-011-0335-3}
  }

@article{bootsma_modified_picard_2026,
    author = {Bootsma, Huite P. and van der Ploeg, Martine J. and
        Weerts, Albrecht H.},
    title = {Why modified {Picard} works: Mass-conservative and higher-order time
        integration for the {Richardson--Richards} equation},
    journal = {Advances in Water Resources},
    year = {2026},
    volume = {213},
    eid = {105327},
    doi = {10.1016/j.advwatres.2026.105327}
  }

@article{farthing_richards_review_2017,
    author = {Farthing, Matthew W. and Ogden, Fred L.},
    title = {Numerical solution of {Richards' equation}: A review of advances
        and challenges},
    journal = {Soil Science Society of America Journal},
    year = {2017},
    volume = {81},
    number = {6},
    pages = {1257--1269},
    doi = {10.2136/sssaj2017.02.0058}
  }

@article{zha_review_numerical_solution_2019,
    author = {Zha, Yuanyuan and Yang, Jinzhong and Zeng, Jicai and
        Tso, Chak-Hau M. and Zeng, Wenzhi and Shi, Liangsheng},
    title = {Review of numerical solution of {Richardson--Richards} equation
        for variably saturated flow in soils},
    journal = {WIREs Water},
    year = {2019},
    volume = {6},
    number = {5},
    eid = {e1364},
    doi = {10.1002/wat2.1364}
  }

@article{chatzikamaris_numerical_solvers_2026,
    author = {Chatzikamaris, Athanasios and Bourantas, Georgios and
        Burganos, Vasilis N. and Malamos, Nikolaos},
    title = {Numerical solvers for the {Richards} equation: A comparative
        review of stability, efficiency and mass conservation},
    journal = {Eng},
    year = {2026},
    volume = {7},
    number = {9},
    eid = {467},
    doi = {10.3390/eng7090467}
  }

@book{hairer_solving_1996,
    author = {Hairer, Ernst and Wanner, Gerhard},
    title = {Solving ordinary differential equations {II}: Stiff and
        differential-algebraic problems},
    edition = {2nd},
    publisher = {Springer},
    year = {1996},
    doi = {10.1007/978-3-642-05221-7}
  }

@book{hundsdorfer_numerical_2003,
    author = {Hundsdorfer, Willem and Verwer, Jan G.},
    title = {Numerical solution of time-dependent
        advection-diffusion-reaction equations},
    series = {Springer Series in Computational Mathematics},
    volume = {33},
    publisher = {Springer},
    year = {2003},
    doi = {10.1007/978-3-662-09017-6}
  }

@book{kopriva_implementing_spectral_methods_2009,
    author = {Kopriva, David A.},
    title = {Implementing spectral methods for partial differential equations:
        Algorithms for scientists and engineers},
    publisher = {Springer},
    year = {2009},
    doi = {10.1007/978-90-481-2261-5}
  }

@book{stewart_sun_matrix_perturbation_theory_1990,
    author = {Stewart, G. W. and Sun, Ji-guang},
    title = {Matrix perturbation theory},
    publisher = {Academic Press},
    year = {1990},
    isbn = {978-0-12-670230-9}
  }

@misc{montoya_spiteri_adaptive_richards_reproducibility_2026,
    author = {Montoya, Tristan and Spiteri, Raymond J.},
    title = {Reproducibility Repository for ``{A} Fully Discrete,
        Mass-Conservative Adaptive Discontinuous Spectral-Element
        Method for the One-Dimensional {Richards} Equation in Mixed Form''},
    year = {2026},
    howpublished = {Zenodo},
    note = {Version 0.1.2},
    doi = {10.5281/zenodo.23270657}
  }

\end{document}